\documentclass[11pt]{article}

\usepackage{amssymb, graphicx}
\usepackage[bookmarksnumbered, colorlinks, plainpages]{hyperref}
\hypersetup{colorlinks=true,linkcolor=red, anchorcolor=green, citecolor=cyan, urlcolor=red, filecolor=magenta, pdftoolbar=true}
\usepackage[centertags]{amsmath}
\usepackage{mathrsfs}
\usepackage{amsthm}
\allowdisplaybreaks[4]
\usepackage[top=2cm,bottom=2cm,left=3cm,right=3cm,includehead,includefoot]{geometry}
\usepackage{indentfirst}
\usepackage{cite}
 \newtheorem{thm}{Theorem}[section]
 
 \newtheorem{lem}[thm]{Lemma}
 
 \theoremstyle{definition}
 
 \theoremstyle{remark}

 \numberwithin{equation}{section}

\begin{document}
\title{{ Compact linear combination of composition operators on Bergman-Orlicz spaces \thanks{ This work is supported by the National Natural Science Foundation of China (12171138,12371129)} }
\author{Yucheng Li$^{a}$,\, Zhiyu Wang$^{a}$\thanks{Corresponding author: wangzhiyumath@163.com} and Liankuo Zhao$^{b}$ \\
{\small \texttt{liyucheng@hebtu.edu.cn,\,\,wangzhiyumath@163.com,\,\,lkzhao@sxnu.edu.cn}}\\
  \small {$^{a}$\,\,\,\,School of Mathematical Sciences, Hebei Normal University, Shijiazhuang 050024,  China  }\\
  \small {Hebei Key Laboratory of Computational Mathematics and Applications } \\
\small {$^{b}$\,\,\,\,School of Mathematical Sciences, Shanxi  Normal University, Taiyuan 030031,  China}}}

\date{}
\maketitle

\textbf{Abstract}\ Let $\Psi$ be an Orlicz function, and let $T$ denote any finite linear combination of composition operators. Motivated by the work of Choe, Koo, and Wang on the compactness of linear combinations of composition operators on weighted Bergman spaces, we completely characterize the compactness of $T$  on the Bergman-Orlicz space $A^{\Psi}(\mathbb D)$. Unlike the classical weighted Bergman space setting, the precise analysis on $A^{\Psi}(\mathbb D)$ depends on the growth behavior of $\Psi$ and the associated Luxemburg norm. Hence, the power-type estimates available in weighted Bergman spaces cannot be applied directly. To overcome this difficulty, under suitable growth assumptions on $\Psi$ and using the Julia-Carathéodory type function theory and Carleson type measure technique, respectively, we establish two necessary and sufficient compactness characterizations for $T$ on $A^{\Psi}(\mathbb D)$.

\textbf{Keywords}\ Bergman-Orlicz space, composition operator, compactness, Julia-Carathéod
ory type condition, Carleson type measure

\textbf{MSC (2020)}\ 47B33, 30H20

\section{Introduction}

Let $\mathbb{D}=\{z:|z|<1\}$ be the open unit disk of the complex plane $\mathbb{C}$, and $\mathbb{T}=\{z:|z|=1\}$ be the unit circle.
Let $H(\mathbb{D})$ denote the space of all holomorphic functions on $\mathbb{D}$. We use $S(\mathbb{D})$ to denote the set of all holomorphic self-maps on $\mathbb{D}$, for $\varphi\in S(\mathbb{D})$, the composition operator $C_{\varphi}$ is defined by
 $$C_{\varphi}f=f\circ\varphi,   \qquad f\in H(\mathbb{D}).$$
 Composition operators between various spaces of holomorphic functions have been extensively studied in the literature (see \cite{BWY,CM,Do,GW,KW,KL,Sh,WX,ZC}).
By Littlewood's subordination principle, every composition operator is bounded on both the classical Hardy spaces and the weighted Bergman spaces.
More recently, Sharma \cite{SS} pointed out that all the composition operators are also bounded on Bergman-Orlicz spaces for every Orlicz functions.
Recall that a function $\Psi:[0,\infty]\to[0,\infty]$ is called an Orlicz functions if it is convex, non-decreasing and satisfies $\Psi(0)=0$ and $\Psi(\infty)=\infty$.
Furthermore, in this paper, we also suppose that $\Psi$ satisfies the following properties: it is continuous at $0$, strictly convex (strictly increasing), and
$$ \frac{\Psi(t)}{t}\longrightarrow \infty  \,\,\,\,\,\, as \,\,\,\,\, t\rightarrow\infty. $$

The Bergman-Orlicz space $A^{\Psi}(\mathbb{D})$ is defined as the set of all holomorphic functions $f$ on $\mathbb{D}$ for which there is a constant $C>0$ such that
$$\int _ {\mathbb{D}} \Psi \bigg( \frac { | f (z) | } { C } \bigg)  {\rm d}A (z) < + \infty,$$
where $ {\rm d}A(z)=\frac{1}{\pi}{\rm d}x{\rm d}y$ is the normalized area measure on $\mathbb{D}$. For $f\in A^{\Psi}(\mathbb{D})$, the Luxemburg norm of $f$ is defined as:
$$\|f\|_{A^{\Psi}(\mathbb{D})}=\inf\bigg\{C>0: \int _ { \mathbb{D} } \Psi \bigg( \frac { | f (z) | } { C } \bigg)  {\rm d} A (z) \leq1\bigg\}.$$
Equipped with this Luxemburg norm,
$A^{\Psi}(\mathbb{D})$ is a Banach space {\rm (see \cite{Ch1,OF})}. For $\Psi(t)=t^{p}(1< p<\infty)$, we obtain the classical Bergman space $A^{p}(\mathbb{D})$. Compared with classical Bergman spaces, Bergman-Orlicz spaces depend on the growth behavior of the Orlicz function. We therefore introduce the following growth conditions.

Let $\Psi$ be an Orlicz function. We say that $\Psi$ satisfies the $ \Delta_{2}$-condition if there exists a constant $C>1$ such that
\begin{align}\label{eq: condition}
  \Psi (2t) & \leq C \Psi (t)\ \ \mbox{for}\ \ t>0.
\end{align}

 An Orlicz function $ \Psi$  is called positive upper type  $p\, (p>0)$(see \cite{YLK}), if there exists a constant $C>0$  such that
\begin{align}\label{eq: positive upper type}
  \Psi(st) & \leq Cs^{p}\Psi(t)\ \ \mbox{for all}\ \  t>0\ \mbox{and}\  s\geq1.
\end{align}
By a straightforward calculation, it is easy to see the equivalence between \eqref{eq: condition} and \eqref{eq: positive upper type}.
We have the following Remark about  $\Delta_{2}$-condition (see Remarks 2(a) following Theorem 4.11 in {\rm\cite{LLQR2}}).

\noindent\textbf{Remark}. Let $\Psi$ be an Orlicz function satisfying the $ \Delta_{2}$-condition. Then for every $B>0$, there exists a constant $C=C_{B}>0$ such that
\begin{align}\label{eq: remark}
   \frac{1}{C}\Psi(t) & \leq\Psi(Bt)\leq C \Psi(t)\ \ \mbox{ for all}\ \ t>0.
\end{align}

In 2006, P. Lefèvre, D. Li, H. Queffélec, and L. Rodríguez-Piazza have undertook a systematic investigation of composition operators acting on both Hardy-Orlicz spaces $H^{\Psi}(\mathbb{D})$ and Bergman-Orlicz spaces $A^{\Psi}(\mathbb{D})$ (see \cite{LLQR,LLQR2,LLQR3,LLQR1}). Subsequently, composition operators on Bergman-Orlicz spaces and Hardy-Orlicz spaces have been extensively studied in the literature ( see \cite{Ch,Ch2,JC,LLQR1,LLQR2,Li,SS,SU}).
Motivated by the work of Choe, Koo and Wang \cite{CKW}, in this paper, we investigate the compact linear combination of composition operators on the setting of Bergman-Orlicz spaces.

MacCluer and Shapiro \cite{MS} proved the Julia-Carathéodory type condition
\begin{align}\label{eq: Julia-Caratheodory type}
R _ { \varphi } (z)  = \frac { 1 - | z | ^{2} } { 1 - | \varphi (z) | ^{2} } \rightarrow 0  \ \text{ as }\  | z | \to 1
\end{align}
is a necessary and sufficient condition for the compactness of composition operators on the weighted Bergman spaces. Furthermore,
 in 2005, Moorhouse \cite{Mo} first characterized the compactness of differences of composition operators on $A_{\alpha}^{2}(\mathbb{D})$ by the
Julia-Carathéodory type condition
\begin{align}\label{eq: Julia-Caratheodory type difference}
M _ { \varphi , \psi } (z)  = [ R _ { \varphi }(z) + R _ { \psi } (z) ] \rho _ { \varphi , \psi } (z) \to 0  \qquad    as  \,\,\,\,\,    | z | \to 1 ,
\end{align}
where $\rho$ denotes the pseudohyperbolic distance.

As is well known, Carleson measures serve as a crucial tool in characterizing the boundedness and compactness of composition operators. The connection between composition operators and Carleson measures comes from a change-of-variable formula (see \eqref{eq: variable-change formula} for the Bergman-Orlicz space).
As the change-of-variable formula cannot be applied to the difference of two composition operators, Koo and Wang \cite{KW1}
overcame this difficulty by introducing the novel concept of joint Carleson measure. They established a Carleson type measure characterization for the compact difference of composition operators acting on the weighted Bergman spaces over the unit ball of $\mathbb{C}^{n}$. Along the same line of the study on differences of composition operators, considerable effort has been devoted to the research of linear combinations of composition operators.
Kriete and Moorhouse \cite{KM} first investigated the linear combination of composition operators on the weighted Bergman  space $A_{\alpha}^{2}(\mathbb{D})$. Later, Choe, Koo and Wang \cite{CKW}  characterized  the compact linear combination of composition operators with arbitrary symbols on $A_{\alpha}^{p}(\mathbb{D})$ by Julia-Carathéodory type characterization and Carleson type measure. Further results on the characterization of composition operators in terms of Carleson measures can be found in \cite{Ch1,CS,Ji,LLQR2,OF}.


To state our main results, we introduce some notations.
Let $N \geq 2$ be an arbitrary integer, and let $\mathscr{P}_N$ denote the group of all permutations of the index set $$\Lambda _ { N } = \{ 1 , 2 , \ldots , N \} .$$
Assume that the coefficients
$$a _ { 1 } , \ldots , a _ { N } \in \mathbb { C } \backslash \{ 0 \}$$
and the symbol functions
$$\varphi _ { 1 } , \varphi _ { 2 } , \ldots, \varphi _ { N } \in S(\mathbb{D}).$$
Let
$$T _ { i }  = C _ { \varphi _ { i } }  ,   \,\,\,     T _ { i , k }  = T _ { i } - T _ { k}$$
for $i,k=1,\ldots,N$. For simplicity, denote the linear combination operators by
$$T = \sum _ { i= 1 } ^{N} a _ { i } T _ { i }.$$
We identify each $\eta \in \mathscr{P}_N$ with the ordered $N$-tuple $(\eta_1, \ldots, \eta_N)$, where $\eta_i = \eta(i)$ and write $$\eta = ( \eta _ { 1 } , \ldots , \eta _ { N } ) .$$
For $\eta \in \mathscr{P}_N$, let
$$s _ { i } ^{\eta} = s _ { i } ^{\eta} ( a _ { 1 } , \cdot\cdot\cdot , a _ { N } )  = \sum _ { k = 1 } ^{i} a _ { \eta _ { k } }$$
for all $i \in \Lambda _ { N }$. The operator $T$  can be rewritten as
\begin{align}\label{eq: operator-T}
T = \sum _ { i = 1 } ^{N - 1} s _ { i } ^{\eta} T _ { \eta _ { i } , \eta _ { i + 1 } } + s _ { N } ^{\eta} T _ { \eta _ { N } }
\end{align}
for each $ \eta\in \mathscr{P}_N$.
Furthermore, let
$$R _ { i }  = R _ { \varphi _ { i } } , \,\,\,   M _ { i , k }  = M _ { \varphi _ { i } , \varphi _ { k } } ,  \,\,\,   \rho _ { i , k }  = \rho _ { \varphi _ { i } , \varphi _ { k } }$$
for $i,k\in\Lambda_N$ and $\varphi _ { i } , \varphi _ { k}\in S(\mathbb{D})$.
By the Schwarz-Pick lemma, each $R_i$ is bounded on $\mathbb{D}$. Consequently, each $M_{i,k}$ is also bounded on $\mathbb{D}$.

Define
\begin{align}\label{eq: define-Q}
Q _ { \eta}  = \sum _ { i = 1 } ^{N - 1} | s _ { i } ^{\eta} |  M _ { \eta _ { i } , \eta _ { i + 1 } } + | s _ { N } ^{\eta} |  R _ { \eta _ { N } }
\end{align}
for $\eta\in\mathscr{P}_{N}$, and set
\begin{align}\label{eq: Q}
Q  = \prod _ { \eta \in \mathscr { P } _ { N } } Q _ { \eta }.
\end{align}
Note that all functions  $Q_{\eta}$ are bounded on $\mathbb{D}$, we write
\begin{align}\label{eq: define-G}
G _ { \eta }  = \left\{ z \in \mathbb { D } : Q _ { \eta } (z) = \min _ { \tau \in \mathscr { P } _ { N } } Q _ { \tau } (z) \right\}
\end{align}
for each $\eta \in \mathscr{P}_N$.

The following theorem is our first main result: Characterizing compact linear combination of composition operators via Julia-Carathéodory type conditions. In what follows, $u_{z}$ is the function defined in Lemma \ref{le: test-function}.
\begin{thm}\label{th: Julia-Caratheodory-type}
Let $\Psi$ be an Orlicz function satisfying the $\Delta_{2}$-condition. Suppose that $\varphi_i\in S(\mathbb{D})$ and $a_i \in \mathbb{C} \backslash \{0\}$ for each $i\in\Lambda_N$. Then the following statements are equivalent:

{\rm (a)} $T$ is compact on $ A^{\Psi}(\mathbb{D})$;

{\rm (b)}  $\lim\limits_{|z|\to1}\frac{\|Tu_{z}\|_{A^{\Psi}(\mathbb{D})}}{\|u_{z}\|_{ A^{\Psi}(\mathbb{D}) }}=0$;

{\rm (c)} $\lim\limits_{|z|\to1}Q(z)=0$.
\end{thm}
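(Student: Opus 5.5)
We will prove (a)$\Rightarrow$(b)$\Rightarrow$(c)$\Rightarrow$(a). Throughout, the $\Delta_2$-condition in the equivalent form \eqref{eq: positive upper type}, together with \eqref{eq: remark}, is what makes Luxemburg norms comparable to power-type integrals. The test functions $u_z$ of Lemma \ref{le: test-function} are normalized kernel-type functions, roughly $u_z(w)=\bigl((1-|z|^2)/(1-\bar z w)^2\bigr)^{\gamma}$ rescaled so that $u_z/\|u_z\|_{A^\Psi}$ is bounded and tends to $0$ uniformly on compact subsets of $\mathbb D$ as $|z|\to1$.

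\textbf{(a)$\Rightarrow$(b).} In $A^\Psi(\mathbb D)$, a bounded sequence converging to $0$ locally uniformly is weakly null in the sense needed. Indeed, a compact operator $T$ maps such sequences to norm-null sequences: this follows from the standard Montel-type argument, using that $T$ is continuous for the topology of local uniform convergence (each $C_{\varphi_i}$ is) and that norm limits are identified through point evaluations. Applying this to $u_z/\|u_z\|$ with $|z|\to1$ gives (b).

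\textbf{(b)$\Rightarrow$(c).} This is the main obstacle and follows Choe--Koo--Wang. First we need a pointwise lower bound. Evaluating $Tu_z$ on a small pseudohyperbolic disc $\Delta(z,\delta)$ and using the subharmonic estimate $|g(z)|^{p}\lesssim (1-|z|^2)^{-2}\int_{\Delta(z,\delta)}|g|^p\,dA$, combined with Jensen for $\Psi$ and \eqref{eq: positive upper type}, we show that $\|Tu_z\|_{A^\Psi}/\|u_z\|_{A^\Psi}\to0$ forces
\[
\sup_{w\in\Delta(z,\delta)}\bigl|Tu_{\zeta}(w)\bigr|\cdot(\text{normalizing factor})\to0
\]
along suitable $\zeta=\zeta(w)$. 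Here one must pick the test point near $\varphi_i(w)$ rather than $w$, and use a family of $u_z$ with several exponents $\gamma$ as in the original paper. Then, at each $w$ near $\partial\mathbb D$, we order the symbols so that the $\varphi_{\eta_i}(w)$ are clustered by pseudohyperbolic distance, which selects a permutation $\eta$. The algebraic lemma of Choe--Koo--Wang, a Vandermonde-type argument with finitely many test functions of different exponents, converts the smallness of $\sum_i a_i u_{\zeta}(\varphi_i(w))$ into smallness of the partial sums $|s^\eta_i|$ weighted by $M_{\eta_i,\eta_{i+1}}(w)$ and $R_{\eta_N}(w)$, that is, $Q_\eta(w)\to0$. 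Since $Q$ is the product over all $\eta$ of bounded factors, $Q(w)\to0$. The Orlicz specificity is only in the norm estimate $\|u_z\|_{A^\Psi}\asymp 1/\Psi^{-1}\bigl((1-|z|^2)^{-2}\bigr)$ and in comparing $\Psi^{-1}(\lambda x)$ with $\Psi^{-1}(x)$ via $\Delta_2$.

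\textbf{(c)$\Rightarrow$(a).} Since $Q\to0$ and each $Q_\tau$ is bounded, on $G_\eta$ the minimum satisfies $Q_\eta^{N!}\le Q\to0$, so $Q_\eta\to0$ on $G_\eta$ as $|z|\to1$. Take a bounded sequence $f_n\to0$ locally uniformly. By \eqref{eq: operator-T} written with $\eta$, we have $Tf_n=\sum_i s^\eta_i T_{\eta_i,\eta_{i+1}}f_n+s^\eta_N T_{\eta_N}f_n$ on $G_\eta$. We estimate each term pointwise using the known bounds $|f(\varphi(z))-f(\psi(z))|\lesssim \rho_{\varphi,\psi}(z)\,\bigl(\text{local averages of }|f|\text{ near }\varphi(z)\bigr)$ and the Carleson-measure/change-of-variables formula \eqref{eq: variable-change formula}, which are controlled by $M$ and $R$. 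Splitting $\mathbb D$ into $\{|z|\le r\}$, where uniform convergence applies, and $\bigcup_\eta G_\eta\cap\{|z|>r\}$, we obtain $\int_{\mathbb D}\Psi(|Tf_n|/\varepsilon)\,dA\le1$ for large $n$, with the convexity of $\Psi$ and \eqref{eq: remark} absorbing the constants. Hence $\|Tf_n\|_{A^\Psi}\to0$ and $T$ is compact.
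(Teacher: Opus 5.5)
Your overall scheme (a)$\Rightarrow$(b)$\Rightarrow$(c)$\Rightarrow$(a) matches the paper's. Your (a)$\Rightarrow$(b) is the paper's argument: the normalized $u_z$ are bounded and tend to $0$ locally uniformly, then Lemma~\ref{le: sequence-compact}. In (c)$\Rightarrow$(a), using $Q_\eta^{N!}\le Q$ on $G_\eta$ is a cleaner way to get the covering than the paper's Carleson-window argument. You should, however, make explicit the split done in Lemma~\ref{le: symbols-difference}. The bound with the factor $\rho$ is only usable on $\{\rho_{\eta_i,\eta_{i+1}}<\delta\}$, where Fubini against the Carleson measure $A\circ\varphi^{-1}$ gives a factor $\delta$. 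On $\{\rho\ge\delta\}$ you have $R_{\eta_i}+R_{\eta_{i+1}}\le\varepsilon/\delta$, and you need the small-constant Carleson estimate of Lemma~\ref{le: Orlicz-Carleson}.

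The genuine gap is in (b)$\Rightarrow$(c). Your sentence that the Choe--Koo--Wang lemma ``converts'' smallness of $\sum_i a_iu_\zeta(\varphi_i(w))$ into $Q_\eta(w)\to0$ is the entire content of this implication. Also, clustering at a single point $w$ has no quantitative meaning; it must be done in the limit along a sequence. Here is what is missing. Assume $|z_n|\to1$ with $\inf_n Q(z_n)>0$, and pass to a subsequence on which every $R_i(z_n)$ and $\rho_{i,k}(z_n)$ converges. Put $F=\{i:R_i(z_n)\to0\}$, and split $F'=\Lambda_N\setminus F$ into the classes $J_1,\dots,J_L$ of the relation $\rho_{i,k}(z_n)\to0$. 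The key fact, which you never state, is
\[
\sum_{i\in J_m}a_i=0 \quad\text{for every } m.
\]
Granted this, take $\sigma=(J_1,\dots,J_L,F)$. Every partial sum at a block boundary vanishes, and the surviving $M$-terms have $\rho\to0$ or both $R$'s tending to $0$. The last term also dies: $R_{\sigma_N}\to0$ if $F\neq\emptyset$, and $s^\sigma_N=0$ otherwise. Hence $Q_\sigma(z_n)\to0$, a contradiction.

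Your Vandermonde idea does prove the zero sums, and more economically than the paper.
\begin{itemize}
\item Fix $k\in F'$ and test at $\zeta_n=\varphi_k(z_n)$. Since $R_k(z_n)$ stays bounded below, $1-|\zeta_n|^2\approx1-|z_n|^2$; this is why the test point must come from $F'$.
\item Lemmas~\ref{le: norm-convergence}, \ref{le: test-function}, \ref{le: properties-of-functions}(i) and \eqref{eq: remark} then turn (b) into $\sum_i a_i\lambda_{i,n}^s\to0$, where $\lambda_{i,n}=(1-|\zeta_n|^2)/(1-\overline{\zeta_n}\varphi_i(z_n))$.
\item In the limit, $\lambda_i=1$ exactly when $\rho_{i,k}(z_n)\to0$, and $\lambda_i=0$ for $i\in F$.
\item So $N$ consecutive exponents and a Vandermonde determinant give $\sum_{i\in J(k)}a_i=0$ for every class at once.
\end{itemize}
The paper instead orders the classes so that $|\varphi_{k_m}(z_n)|$ is maximal on $J_m\cup\dots\cup J_L$. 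It then inducts on $m$, bounds the later classes by $(1-\eta_1^2)^{s/2}$, and lets $s\to\infty$.

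Both arguments use (b) for more than one exponent, whereas (b) is stated for the fixed $s$ of Lemma~\ref{le: test-function}. You have two ways out. You can read (b) as holding for all $s>2$, which is what (a)$\Rightarrow$(b) delivers. Or you can keep one $s$ and let the test point range over $\zeta=\varphi_{\zeta_n}(t)$, $|t|\le1/2$. By M\"obius invariance, after a further subsequence with $\zeta_n\to\xi$ and $b_i=\lim\varphi_{\zeta_n}(\varphi_i(z_n))$, the limit becomes
\[
\sum_i a_i(1-\bar\xi b_i)^s(1-\bar t b_i)^{-s}\equiv0 .
\]
The terms with $|b_i|=1$ drop out, and $b_i=0$ exactly for $i\in J(k)$. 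The Taylor coefficients in $\bar t$ then give the same Vandermonde system.
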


Our second main result provides a Carleson measure characterization. We now introduce Carleson measure on the Bergman-Orlicz spaces.

For $\xi \in \mathbb{T}$ and $h \in (0,1)$, the Carleson window  $W ( \xi , h ) $ is defined by
$$W ( \xi , h ) = \{ z \in \mathbb { D }   ; | z |>1-h  ,  | \arg ( z \bar { \xi } ) | < h \} .$$
Let $\mu$ be a positive Borel measure on $\mathbb{D}$. We say that $\mu$ is a Carleson measure on $\mathbb{D}$
if there exists a constant $C > 0$ such that
$$\mu \big ( W ( \xi , h ) \big ) \leq Ch^{2}.$$
We say that $\mu$  is a vanishing Carleson measure on $\mathbb{D}$ if
$$\lim_{h\rightarrow0}\sup_{\xi\in\mathbb{T}}\frac{ \mu \big  (W ( \xi , h ) \big) }{h^{2}}=0 .$$

Assume that $\Psi$ be an Orlicz function satisfying the $\Delta_{2}$-condition, by Remark 2(a) following Theorem 4.11 in \cite{LLQR2}, for every $B>0$, we have
$$\frac{1}{\Psi(B\Psi^{-1}(1/h^{2}))}\approx h^{2}.$$
Therefore, applying Theorem 2.5 and Theorem 2.9 in \cite{Ch1} to the case $N=1$ and $\alpha=0$, we obtain the following characterization:
Identity embedding $I_{\mu} :A^{\Psi}(\mathbb{D})\rightarrow L^{\Psi}(\mu)$ is bounded if and only if
 $\mu$ is a Carleson measure.
 Identity embedding $I_{\mu} :A^{\Psi}(\mathbb{D})\rightarrow L^{\Psi}(\mu)$  is compact if and only if
$\mu$  is a vanishing Carleson measure. In particular, the following theorem will be used to prove our second main result.

\vskip.2cm

\noindent\textbf{Theorem A.}\label{th: pullback-measure}(\cite[Theorem 3.3]{Ch1})
{\it Let $\Psi$ be an Orlicz function satisfying the $\Delta_{2}$-condition, $\varphi\in S(\mathbb{D})$, $\mu$ be a positive  Borel measure on $\mathbb{D}$. Then}

{\rm (1)}  {\it $C_{\varphi}$ is bounded on $A^{\Psi}(\mathbb{D})$ if and only if $ A  \circ \varphi ^{- 1}$ is a Carleson measure.}

{\rm (2)} {\it $C_{\varphi}$ is compact on $A^{\Psi}(\mathbb{D})$ if and only if $ A  \circ \varphi ^{- 1}$ is a vanishing Carleson measure.}

\vskip.2cm

Given $\eta\in \mathscr{P}_N$, for any Borel set $E \subset \mathbb{D}$, we define the joint pullback measures $\mu_{\Psi,\eta_i} $ and $\nu_{\Psi,\eta_i} $ as follows,
$$\mu_{\Psi,\eta_i}(E)  = \int  _{\varphi _ { \eta _ { i } } ^{- 1} (E)  \cap G _ { \eta } } \Psi(M_{\eta _ { i },\eta _ { i+1 }}){\rm d} A +\int  _{\varphi _ { \eta _ { i+1 } } ^{- 1} (E)  \cap G _ { \eta } }\Psi( M_{\eta _ { i },\eta _ { i+1 }}) {\rm d} A, $$
$$\nu_{\Psi,\eta_i} (E)  = \int  _{\varphi _ { \eta _ { i } } ^{- 1} (E)  \cap G _ { \eta } } \Psi(\rho_{\eta _ { i },\eta _ { i+1 }}) {\rm d} A +\int  _{\varphi _ { \eta _ { i+1 } } ^{- 1} (E)  \cap G _ { \eta } } \Psi(\rho_{\eta _ { i },\eta _ { i+1 }}) {\rm d} A $$
for $1 \leq i < N$ and
$$\mu _ { \Psi,\eta _ { N } } (E)  = \nu _ { \Psi,\eta _ { N } } (E)  = \int _ {   \varphi _ { \eta _ { N } } ^{- 1} (E) \cap G _ { \eta }} \Psi( R _ { \eta _ { N } } )  {\rm d} A $$
for $ i = N$. Set
\begin{align}\label{eq: joint pullback measures}
\mu _{\Psi} = \sum _ { \eta \in \mathscr { P } _ { N } } \mu _{\Psi,\eta},  \quad \text {where} \quad \mu _{\Psi,\eta}  = \sum _ { i = 1 } ^{N} | s _ { i} ^{\eta} |  \mu _ { \Psi,\eta _ { i } },
\end{align}
and
$$\nu _{\Psi}  = \sum _ { \eta \in \mathscr { P } _ { N } } \nu  _{\Psi,\eta},   \quad \text {where}  \quad   \nu  _{\Psi,\eta}  = \sum _ { i = 1 } ^{N} | s _ { i } ^{\eta} |  \nu _ { \Psi,\eta _ { i } } .$$

 The following theorem  is our second main result: Characterizing compact linear combination of composition operators via Carleson measure.
\begin{thm}
Let $\Psi$ be an Orlicz function satisfying the $\Delta_{2}$-condition, $\varphi_i\in S(\mathbb{D})$ and $a_i \in \mathbb{C} \backslash \{0\}$ for $i\in\Lambda _ { N }$. Then the following statements are equivalent:

{\rm (a)} $T$ is compact on $A^{\Psi}(\mathbb{D})$;

{\rm (b)} $\mu_{\Psi}$ is a vanishing Carleson measure on $\mathbb{D}$;

{\rm (c)} $\nu_{\Psi}$ is a vanishing Carleson measure on $\mathbb{D}$.
\end{thm}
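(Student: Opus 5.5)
Since Theorem \ref{th: Julia-Caratheodory-type} already identifies compactness of $T$ with $Q(z)\to 0$ as $|z|\to1$, I would organize the proof as (a)$\Rightarrow$(c)$\Rightarrow$(b)$\Rightarrow$(a), routing each implication through condition (c) of Theorem \ref{th: Julia-Caratheodory-type}. The central observation is that on $G_\eta$ we have $Q_\eta=\min_\tau Q_\tau$, and therefore $Q_\eta^{N!}\le Q$ there. Consequently $Q\to0$ forces $Q_\eta\to0$ uniformly on $G_\eta\cap\{|z|>r\}$ as $r\to1$. Since $|s_i^\eta|M_{\eta_i,\eta_{i+1}}\le Q_\eta$ and $|s_N^\eta|R_{\eta_N}\le Q_\eta$, every density appearing in $\mu_\Psi$ tends to $0$ near the boundary on the relevant piece of $G_\eta$, whenever the corresponding coefficient $s_i^\eta$ is nonzero. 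Terms with $s_i^\eta=0$ contribute nothing to the measures.

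\textbf{(a)$\Rightarrow$(b).} Fix $\eta$ and $i$ with $s_i^\eta\ne0$. The measure $\mu_{\Psi,\eta_i}$ is bounded by the pullback of $A$ under $\varphi_{\eta_i}$ and $\varphi_{\eta_{i+1}}$, restricted to the set $G_\eta$ and weighted by $\Psi(M)$.
\begin{itemize}
\item Split $G_\eta$ into the part where $|z|\le r$ and the part where $|z|>r$.
\item On the part $|z|>r$, the weight satisfies $\Psi(M)\le\Psi(\varepsilon)$ with $\varepsilon$ small, by the observation above. Here I use that $M$ is bounded and that $\Psi$ is continuous at $0$.
\item Hence this part is at most $\Psi(\varepsilon)\bigl(A\circ\varphi_{\eta_i}^{-1}+A\circ\varphi_{\eta_{i+1}}^{-1}\bigr)(W)$.
\item By Theorem A(1) these pullback measures are Carleson, since every $C_\varphi$ is bounded. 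So this part is $\le C\Psi(\varepsilon)h^2$.
\item On the part $|z|\le r$, the images $\varphi(\{|z|\le r\})$ lie in a compact subset of $\mathbb D$. They therefore miss $W(\xi,h)$ once $h$ is small, uniformly in $\xi$.
\end{itemize}
This shows that $\mu_\Psi$ is vanishing Carleson.

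\textbf{(b)$\Rightarrow$(c).} This is pointwise, because $\rho\le M$ up to harmless factors wherever $R_{\eta_i}+R_{\eta_{i+1}}$ is bounded below. I would not attempt a direct comparison. Instead I would prove (b)$\Rightarrow$(a) and (a)$\Rightarrow$(c), the latter by the same argument as (a)$\Rightarrow$(b) with $\rho\le 1$ replacing $M$. I would then prove (c)$\Rightarrow$(a).

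\textbf{(c)$\Rightarrow$(a), and similarly (b)$\Rightarrow$(a).} I would test $T$ on the normalized functions $u_z$ and aim for condition (b) of Theorem \ref{th: Julia-Caratheodory-type}.
\begin{itemize}
\item Decompose $\mathbb D=\bigcup_\eta G_\eta$ and use the representation \eqref{eq: operator-T} on each $G_\eta$.
\item On $G_\eta$, estimate $|T_{\eta_i,\eta_{i+1}}f(w)|$ by the standard Koo--Wang estimate for differences: $|f(\varphi(w))-f(\psi(w))|\lesssim\rho(w)\int_{\Delta(\varphi(w),\delta)}|f|\,dA/(1-|\varphi(w)|^2)^2$, plus the analogous term centered at $\psi(w)$ when $\rho\ge\delta$.
\item Then $\int_{G_\eta}\Psi(|Tf|/\lambda)\,dA$ is controlled, via convexity and Jensen's inequality applied to the averaged integrals, by $\int\Psi(|f|/c\lambda)\,d\tilde\nu$. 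Here $\tilde\nu$ is a sum of averaged versions of $\nu_{\Psi,\eta_i}$.
\item Averages of vanishing Carleson measures over pseudohyperbolic disks remain vanishing Carleson.
\item The factor $\rho$ must be moved inside $\Psi$. This uses $\Psi(\rho t)\le\rho\Psi(t)$ from convexity, or more precisely uses the $\Delta_2$ upper type bound together with the weight $\Psi(\rho)$. Applying Remark \eqref{eq: remark} then gives a product estimate $\Psi(\rho t)\lesssim\Psi(\rho)\Psi(t)$-like comparison.
\end{itemize}

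I expect this last product estimate to be the \textbf{main obstacle}, since Orlicz functions are not multiplicative. I would first try to avoid it. Because $u_z$ concentrates near $z$, it suffices to show $\int\Psi(|Tu_z|/\varepsilon\|u_z\|)\,dA\le1$ for $|z|$ near $1$. Evaluating $u_z$ on windows, where $|u_z|\approx\|u_z\|\Psi^{-1}(1/h^2)$, reduces this to the identity $1/\Psi(B\Psi^{-1}(1/h^2))\approx h^2$. That identity gives $Q(z)\to0$ through the measure of windows, which yields (c) of Theorem \ref{th: Julia-Caratheodory-type}.
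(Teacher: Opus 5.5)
Your (a)$\Rightarrow$(b) is the paper's argument: $Q_\eta^{N!}\le Q$ on $G_\eta$, Theorem~A(1), and the compactness of $\varphi_i(\{|z|\le r\})$. The other implications have genuine gaps.

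\textbf{The route to (c).} For (a)$\Rightarrow$(c), replacing $M$ by the bound $\rho\le 1$ only gives $\Psi(\rho)\le\Psi(1)$. Hence $\nu_\Psi(W(\xi,h))\lesssim h^2$, which is Carleson, not vanishing Carleson. The condition $Q\to0$ only forces $M_{\eta_i,\eta_{i+1}}=(R_{\eta_i}+R_{\eta_{i+1}})\rho_{\eta_i,\eta_{i+1}}\to 0$. This can happen with $\rho$ bounded away from $0$ while the $R$'s are small. You need the dichotomy of Lemma~\ref{le: vanishing-Carleson-measure}. Off $K_\varepsilon=\{R_\varphi+R_\psi\le\varepsilon\}$ one has $\rho\le M/\varepsilon$. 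On $K_\varepsilon$ the restricted pullback contribution is $O(\varepsilon^{\gamma})$ by Lemma~\ref{le: Orlicz-Carleson}. You also have the pointwise comparison the wrong way round. Each $R_i$ is bounded by Schwarz--Pick, so $M_{\eta_i,\eta_{i+1}}\lesssim\rho_{\eta_i,\eta_{i+1}}$ everywhere, and $\Delta_2$ gives $\mu_\Psi\lesssim\nu_\Psi$. That is (c)$\Rightarrow$(b) for free, which is exactly the link in the cycle you first announced, so no separate (c)$\Rightarrow$(a) is needed.

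\textbf{The decisive gap: (b)$\Rightarrow$(a).} The product bound $\Psi(\rho t)\lesssim\Psi(\rho)\Psi(t)$ that you flag is false for general $\Delta_2$ functions. For $\Psi(t)=t\log(1+t)$ and $t=\rho^{-2}$, the ratio $\Psi(\rho t)/(\Psi(\rho)\Psi(t))$ is about $1/(2\rho)\to\infty$. Your fallback, testing on $u_z$ and reading off $Q(z)\to0$ from the measure of windows, would need a lower bound for $\mu_\Psi(W)$ in terms of $Q$ at a point. That in turn requires controlling both $G_\eta$ and $M$ on a whole pseudohyperbolic disc, and your sketch supplies none of this.

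The paper never needs multiplicativity. Take a bounded sequence $f_n\to 0$ locally uniformly, and split $G_\eta\cap\{|z|>r\}$ along the level set $\{M_{\eta_i,\eta_{i+1}}\ge\varepsilon\}$.
\begin{itemize}
\item On that set, $1\le\Psi(M)/\Psi(\varepsilon)$. So the contribution is $\lesssim\Psi(\varepsilon)^{-1}\int\Psi(|f_n|)\,d\mu_{\Psi,\eta}$, which tends to $0$ because the embedding $A^{\Psi}(\mathbb{D})\to L^{\Psi}(\mu_{\Psi,\eta})$ is compact.
\item On $\{M<\varepsilon\}$, Lemma~\ref{le: symbols-difference} bounds the contribution by $h(\varepsilon)$.
\item The $R_{\eta_N}$-term is split the same way, using Lemma~\ref{le: Orlicz-Carleson}.
\end{itemize}
The weight $\Psi(M)$ is used only as an indicator lower bound, which is why the non-multiplicativity of $\Psi$ never enters.
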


This paper is organized as follows. In Section 2, we introduce some basic facts and  estimates on Orlicz functions that will be used throughout the paper. Section 3 is devoted to the proof of Theorem 1.1. In Section 4, we give the proof of Theorem 1.2.

Throughout the paper, we use the letter $C$ to denote various positive constants which may vary at each occurrence.
The notation $A\lesssim B$ ($B\gtrsim A$) means that there exists an absolute positive constant $C$  such that $A\leq CB$. We write $A\approx B$ means that both $A\lesssim B$ and $B\lesssim A$ hold.

\section{Preliminaries}
In this section, we list some basic facts on the pseudohyperbolic distance and establish some lemmas that will be used in later sections.

\subsection{Pseudohyperbolic distance}

Let $\rho(a,z)$ be the pseudohyperbolic distance between $a$ and $z$, and
$$\rho(a,z)=|\varphi_{a}(z)|=\left|\frac{a-z}{1-a\bar{z}}\right|, \ a, z\in\mathbb{D}.$$
By a simple calculation, we have
\begin{align}\label{eq: distance}
1-\rho^{2}(a,z)=\frac{(1-|a|^{2})(1-|z|^{2})}{|1-a\bar{z}|^{2}}.
\end{align}
For  $a\in\mathbb{D}$ and $0<r<1$, $E_{r}(a) $  denotes the pseudohyperbolic disk centered at $a$ with radius $r$. That is, $$E_{r}(a) =\{z\in\mathbb{D}, \rho(a,z)<r\}.$$
In fact, $E_{r}(a)$ is a Euclidean disk centered at $\frac{1-r^{2}}{1-|a|^{2}r^{2}}a$ with radius $\frac{(1-|a|^{2})}{1-|a|^{2}r^{2}}r$.
For all $a,z\in\mathbb{D}$, by \eqref{eq: distance}, we have
\begin{align}\label{eq: ratio-estimate}
\frac{1-\rho(a,z)}{1+\rho(a,z)}\leq\frac{1-|a|}{1-|z|}\leq\frac{1+\rho(a,z)}{1-\rho(a,z)}.
\end{align}
Given $0<r<1$, it follows that
\begin{align}\label{eq: area-estimate}
  A(E_{r}(a)) & \approx(1-|a|^{2})^{2},      \quad  a\in \mathbb{D}.
\end{align}
For $a\in\mathbb{D}$, $0<r<1$ and $z\in E_{r}(a)$, we have
\begin{align}\label{eq: kernel-estimate}
 |1-\bar{a}z| \approx 1-|a|^{2}\approx  1-|z|^{2}.
\end{align}
The constant in formula \eqref{eq: area-estimate}  and \eqref{eq: kernel-estimate} depend only on $r$.
All the details for the statements above can be found in \cite[Sections 4.1 and 4.2]{Zh}.

\subsection{Some Lemmas}

In this subsection, we present some results on Orlicz functions in Bergman-Orlicz space $A^\Psi(\mathbb{D})$.

The following lemma characterizes the compactness of a linear combination of composition operators on $A^\Psi(\mathbb{D})$.
The compactness of a single composition operator on Bergman-Orlicz spaces over the unit disk can be found in {\rm \cite[Proposition 5.4]{LLQR2}}. The proof can be easily modified for the operator $T$.
\begin{lem}\label{le: sequence-compact}
Let $T$ be a linear combination of composition operators as described in Section 1. Then $T$ is compact on $A^{\Psi}(\mathbb{D})$ if and only if for every bounded sequence $\{f_{n}\}$ in $A^{\Psi}(\mathbb{D})$ that converges to $0$ uniformly on compact subsets of $\mathbb{D}$, we have $\|Tf_{n}\|_{A^\Psi(\mathbb{D})}\rightarrow0$ as $n\rightarrow \infty$.
\end{lem}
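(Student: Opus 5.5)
The plan is to prove the two directions separately. The main tools are a normal-families argument in $H(\mathbb{D})$ and the boundedness of each $C_{\varphi_i}$ on $A^{\Psi}(\mathbb{D})$, which holds for every Orlicz function by \cite{SS}. Throughout, $T=\sum_{i=1}^N a_iC_{\varphi_i}$ is then a bounded operator on $A^{\Psi}(\mathbb{D})$.

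\emph{Necessity.} Assume $T$ is compact, and let $\{f_n\}$ be bounded in $A^{\Psi}(\mathbb{D})$ with $f_n\to0$ uniformly on compact subsets. Suppose, for contradiction, that $\|Tf_n\|_{A^\Psi(\mathbb{D})}\not\to0$. Passing to a subsequence, we get $\|Tf_{n}\|\ge\varepsilon>0$. By compactness we may also assume $Tf_{n}\to g$ in $A^{\Psi}(\mathbb{D})$.

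Norm convergence in $A^{\Psi}(\mathbb{D})$ implies locally uniform convergence. To see this, write $\Psi^{-1}$ for the inverse function. From the Luxemburg norm, subharmonicity of $|f|$, and Jensen's inequality (as $\Psi$ is convex), one gets the point evaluation estimate
$$|f(z)|\le \|f\|\,\Psi^{-1}\!\Big(\frac{C}{(1-|z|)^2}\Big),$$
which is uniform on compact sets.

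On the other hand, $f_{n}\circ\varphi_i\to0$ locally uniformly, because $\varphi_i$ maps compact sets into compact sets. Hence $Tf_{n}\to0$ pointwise, so $g=0$. This contradicts $\|Tf_n\|\ge\varepsilon$.

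\emph{Sufficiency.} Assume the sequential condition holds, and let $\{f_n\}$ be a sequence in the unit ball of $A^{\Psi}(\mathbb{D})$. By the point evaluation estimate, $\{f_n\}$ is locally uniformly bounded. Montel's theorem then gives a subsequence $f_{n_k}\to f$ locally uniformly, with $f\in H(\mathbb{D})$.

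Next we check that $f\in A^{\Psi}(\mathbb{D})$ with $\|f\|\le1$. Since $\Psi$ is continuous, $\Psi(|f_{n_k}|)\to\Psi(|f|)$ pointwise. For every $C>1$, Fatou's lemma gives
$$\int_{\mathbb{D}}\Psi(|f|/C)\,dA\le\liminf_k\int_{\mathbb{D}}\Psi(|f_{n_k}|/C)\,dA\le1.$$
Letting $C\downarrow1$ yields the claim.

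The sequence $h_k=f_{n_k}-f$ is then bounded by $2$ in norm and tends to $0$ uniformly on compact subsets. By hypothesis $\|Th_k\|\to0$, that is, $Tf_{n_k}\to Tf$ in norm. Hence $T$ maps bounded sets to relatively compact sets, so $T$ is compact.

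The only real technical point is the point evaluation estimate in the necessity step, which is what makes norm convergence imply locally uniform convergence. To prove it, I would apply the sub-mean value property on the disc $D(z,(1-|z|)/2)$, whose normalized area is $(1-|z|)^2/4$. With $c=(1-|z|)^2/4$, Jensen's inequality for the convex $\Psi$ gives
$$\Psi\big(|f(z)|/\|f\|\big)\le \frac{1}{c}\int_{D(z,(1-|z|)/2)}\Psi\big(|f|/\|f\|\big)\,dA\le \frac{4}{(1-|z|)^2}.$$
This uses that $\int_{\mathbb{D}}\Psi(|f|/\|f\|)\,dA\le1$, which holds by Fatou's lemma and the continuity of $\Psi$. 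The inequality is uniform for $z$ in any compact subset of $\mathbb{D}$. No $\Delta_2$-condition is needed for this lemma.
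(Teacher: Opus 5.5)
Your proof is correct. It is the standard normal-families argument: a point-evaluation bound via subharmonicity and Jensen's inequality, then Montel's theorem, then Fatou's lemma to keep the limit in the closed unit ball. This is exactly what the paper has in mind when it defers to \cite[Proposition 5.4]{LLQR2} and says the proof ``can be easily modified'' for $T$; the paper supplies no further details. Your remark that no $\Delta_2$-condition is needed is also accurate.
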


The following lemma plays an important role in Bergman-Orlicz spaces. It translates norm convergence into an integral convergence, and serves as a foundational tool for studying the compactness of operators.

\begin{lem}\label{le: norm-convergence}
Let $\Psi$ be an Orlicz function satisfying the $\Delta_{2}$-condition. Then, for any sequence $\{f_{n}\}$ in $A^{\Psi}(\mathbb{D})$,  $\lim_{n\to\infty}\|f_n\|_{A^\Psi(\mathbb{D})}=0$ if and only if
$$\lim _ { n \to \infty } \int _ { \mathbb{D} }\Psi ( | f _ { n } (z) | )    {\rm d}A(z)  = 0 .$$
\end{lem}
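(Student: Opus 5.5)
The plan is to prove the two implications separately. The tool throughout is the $\Delta_2$-condition in its positive upper type form \eqref{eq: positive upper type}, $\Psi(st)\le Cs^{p}\Psi(t)$ for $s\ge1$, together with convexity and $\Psi(0)=0$. Convexity gives $\Psi(\lambda t)\le\lambda\Psi(t)$ for $0\le\lambda\le1$. Write $\|f\|=\|f\|_{A^{\Psi}(\mathbb{D})}$ and $I(f)=\int_{\mathbb{D}}\Psi(|f|)\,{\rm d}A$. The strict monotonicity and continuity of $\Psi$ guarantee that, whenever $0<\|f\|<\infty$, the infimum in the Luxemburg norm is attained:
$$\int_{\mathbb{D}}\Psi\bigl(|f|/\|f\|\bigr)\,{\rm d}A=1.$$
This uses the $\Delta_2$-condition, which makes $I(f/C)$ finite and continuous in $C$ by dominated convergence. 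We may also discard the indices $n$ with $f_n=0$.

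\emph{Norm convergence implies integral convergence.} Suppose $\|f_n\|\to0$, and take $n$ large so that $\|f_n\|\le1$. Convexity gives
$$\Psi(|f_n|)=\Psi\bigl(\|f_n\|\cdot|f_n|/\|f_n\|\bigr)\le\|f_n\|\,\Psi\bigl(|f_n|/\|f_n\|\bigr).$$
Integrating and using the normalization above yields $I(f_n)\le\|f_n\|\to0$. This direction does not need $\Delta_2$, apart from the attainment of the infimum. Alternatively one can integrate against $\int\Psi(|f_n|/C)\le1$ for any $C>\|f_n\|$ and let $C\downarrow\|f_n\|$.

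\emph{Integral convergence implies norm convergence.} Suppose $I(f_n)\to0$ and fix $\varepsilon\in(0,1)$. Applying \eqref{eq: positive upper type} with $s=1/\varepsilon\ge1$ gives
$$\int_{\mathbb{D}}\Psi\bigl(|f_n|/\varepsilon\bigr)\,{\rm d}A\le C\varepsilon^{-p}\,I(f_n).$$
For $n$ large the right-hand side is at most $1$, so by definition of the Luxemburg norm $\|f_n\|\le\varepsilon$. Since $\varepsilon$ is arbitrary, $\|f_n\|\to0$.

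The argument is short. The only delicate point is this second direction: it is exactly where $\Delta_2$ is indispensable. Without $\Delta_2$, $I(f_n)\to0$ need not control $I(f_n/\varepsilon)$ at all. So the real step is to invoke the equivalence of \eqref{eq: condition} with \eqref{eq: positive upper type}, which the paper already notes, and to make sure the constant $C\varepsilon^{-p}$ is independent of $n$, which it is.
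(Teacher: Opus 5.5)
Your proof is correct, and it takes a more self-contained route than the paper's. The paper does not argue either direction directly. It quotes the Remark after Proposition 3.14 of \cite{LLQR2}: $\|f_n\|_{A^\Psi(\mathbb{D})}\to0$ if and only if $\int_{\mathbb{D}}\Psi(|f_n|/\varepsilon)\,{\rm d}A\to0$ for every $\varepsilon>0$. It then uses the two-sided comparability \eqref{eq: remark}, $C_\varepsilon^{-1}\Psi(t)\le\Psi(t/\varepsilon)\le C_\varepsilon\Psi(t)$, to replace $\Psi(|f_n|/\varepsilon)$ by $\Psi(|f_n|)$.

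You instead prove the halves of that cited remark that are actually needed:
\begin{itemize}
\item Convexity gives the quantitative bound $\int_{\mathbb{D}}\Psi(|f|)\,{\rm d}A\le\|f\|_{A^\Psi(\mathbb{D})}$ whenever $\|f\|_{A^\Psi(\mathbb{D})}\le1$.
\item Upper type $p$ gives $\|f_n\|_{A^\Psi(\mathbb{D})}\le\varepsilon$ as soon as $C\varepsilon^{-p}\int_{\mathbb{D}}\Psi(|f_n|)\,{\rm d}A\le1$.
\end{itemize}

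The citation route is shorter on the page. Yours buys three things. It makes visible that only the implication from integral convergence to norm convergence needs $\Delta_2$; the paper invokes both sides of \eqref{eq: remark}, although taking $\varepsilon=1$ in the cited remark already gives the forward direction. It also gives explicit rates, namely $\int_{\mathbb{D}}\Psi(|f_n|)\,{\rm d}A\le\|f_n\|_{A^\Psi(\mathbb{D})}$ and $\|f_n\|_{A^\Psi(\mathbb{D})}\lesssim\bigl(\int_{\mathbb{D}}\Psi(|f_n|)\,{\rm d}A\bigr)^{1/p}$. And it does not depend on an external reference.

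One simplification: you do not need the attainment identity $\int_{\mathbb{D}}\Psi\bigl(|f|/\|f\|_{A^\Psi(\mathbb{D})}\bigr)\,{\rm d}A=1$ or the dominated-convergence discussion behind it. Your own alternative is cleaner. For $\|f_n\|_{A^\Psi(\mathbb{D})}<C\le1$ you have $\int_{\mathbb{D}}\Psi(|f_n|/C)\,{\rm d}A\le1$, which follows just from the definition of the infimum and the monotonicity of $\Psi$. Convexity then gives $\int_{\mathbb{D}}\Psi(|f_n|)\,{\rm d}A\le C$, and you let $C\downarrow\|f_n\|_{A^\Psi(\mathbb{D})}$.
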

\begin{proof}
By the Remark following {\rm Proposition 3.14} in {\rm \cite{LLQR2}}, for any sequence $\{f_{n}\}$ in $A^{\Psi}(\mathbb{D})$, $\lim\limits_{n\to\infty}\|f_n\|_{A^\Psi(\mathbb{D})}=0$ if and only if for every $\varepsilon>0$,
$$\lim _ { n \to \infty } \int _ { \mathbb{D} }\Psi \bigg( \frac{| f _ { n } |}{\varepsilon} \bigg)    {\rm d}A  = 0 .$$
Moreover, applying \eqref{eq: remark}, there exists a constant $C_{\varepsilon}>0$ such that
$$ \frac{1}{C_{\varepsilon}}  \Psi ( | f _ { n } | ) \leq\Psi\bigg( \frac{| f _ { n } |}{\varepsilon} \bigg)\leq C_{\varepsilon} \Psi ( | f _ { n } | ).$$
Hence, if $\Psi$ satisfies the $\Delta_{2}$-condition, then for any sequence $\{f_{n}\}$ in $A^{\Psi}(\mathbb{D})$, $\lim\limits_{n\to\infty}\|f_n\|_{A^\Psi(\mathbb{D})}=0$ if and only if
$$\lim _ { n \to \infty } \int _ { \mathbb{D} }\Psi ( | f _ { n } | )    {\rm d}A  = 0 .$$
\end{proof}

In what follows, we construct a test function in the Bergman-Orlicz space $A^{\Psi}(\mathbb{D})$ and estimate its Luxemburg norm.
\begin{lem}\label{le: test-function}
Let $\Psi$ be an Orlicz function satisfying the positive upper type $p$ condition for some $0<p<\infty$. For $z\in\mathbb{D}$ and  $s>2$, define $$u_{z}(\omega)=\bigg(\frac{1-|z|^{2}}{1-\bar{z}\omega}\bigg)^{s},       \,\,\,\,\,\,   \omega\in\mathbb{D}.$$ Then $u_{z}\in A^{\Psi}(\mathbb{D}) $ and
$$\|u_{z}\|_{ A^{\Psi}(\mathbb{D}) } \approx\frac{1}{\Psi^{-1}\big(\frac{1}{(1-|z|^{2})^{2}}\big)}.$$
\end{lem}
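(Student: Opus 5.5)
The plan is to set $t=\Psi^{-1}\big(1/(1-|z|^2)^2\big)$, so that $\Psi(t)=(1-|z|^2)^{-2}$, and to prove the two one-sided bounds $\|u_z\|_{A^\Psi}\lesssim 1/t$ and $\|u_z\|_{A^\Psi}\gtrsim 1/t$. Both reduce to estimating
\begin{align*}
I(\lambda)=\int_{\mathbb D}\Psi\bigl(\lambda\,|u_z(\omega)|\bigr)\,{\rm d}A(\omega)
\end{align*}
for $\lambda=c\,t$ with a suitable constant $c$. The key observation is $|u_z(\omega)|\le 2^{s}$ always, while $|u_z(\omega)|\approx 1$ on the pseudohyperbolic disk $E_{1/2}(z)$ by \eqref{eq: kernel-estimate}. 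Note also that membership $u_z\in A^\Psi(\mathbb D)$ is immediate, since $u_z$ is bounded and $\Psi$ is finite.

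\emph{Lower bound.} On $E_{1/2}(z)$ we have $|u_z|\ge c_0>0$, with $c_0$ depending only on $s$. Hence, using \eqref{eq: area-estimate},
\begin{align*}
I(\lambda)\ge \Psi(\lambda c_0)\,A(E_{1/2}(z))\ge c_1\,\Psi(\lambda c_0)(1-|z|^2)^2 .
\end{align*}
By the Remark \eqref{eq: remark} (equivalently, upper type $p$), for $\lambda=\varepsilon t$ with $\varepsilon$ small this is not yet enough. Instead I argue directly: if $C<\|u_z\|$ then $I(1/C)>1$. It is cleaner to argue the contrapositive: if $\lambda= Kt$ with $K$ large, then $I(\lambda)\ge c_1\Psi(Kc_0 t)(1-|z|^2)^2\ge c_1 Kc_0\Psi(t)(1-|z|^2)^2=c_1c_0K>1$ by convexity ($\Psi(at)\ge a\Psi(t)$ for $a\ge1$). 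Therefore $\|u_z\|\ge 1/(Kt)$.

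\emph{Upper bound.} This is the main obstacle. I need $I(\varepsilon t)\le 1$ for a fixed small $\varepsilon$, and the difficulty is that the tail region where $|u_z|$ is small contributes an amount that is not obviously controlled. I would use convexity in the form $\Psi(a x)\le a\Psi(x)$ for $0\le a\le1$. With $a=|u_z(\omega)|/2^{s}\le1$,
\begin{align*}
\Psi(\varepsilon t|u_z|)\le 2^{-s}|u_z|\,\Psi(2^{s}\varepsilon t)\lesssim |u_z|\,\Psi(t),
\end{align*}
where the last step uses \eqref{eq: remark}. Then
\begin{align*}
I(\varepsilon t)\lesssim \Psi(t)\int_{\mathbb D}\frac{(1-|z|^2)^s}{|1-\bar z\omega|^s}\,{\rm d}A(\omega)\approx \Psi(t)(1-|z|^2)^2=1,
\end{align*}
by the standard Forelli–Rudin estimate $\int_{\mathbb D}|1-\bar z\omega|^{-s}\,{\rm d}A\approx(1-|z|^2)^{2-s}$, valid since $s>2$. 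This is exactly where the hypothesis $s>2$ is used.

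Finally, I would scale $\varepsilon$ so that the implied constant is absorbed: replace $\varepsilon$ by $\varepsilon/C'$, using the convexity inequality once more, which gives $I\le1$ and hence $\|u_z\|\le 1/(\varepsilon t)$. Combining the two bounds yields $\|u_z\|_{A^\Psi}\approx 1/\Psi^{-1}\big((1-|z|^2)^{-2}\big)$, with constants depending only on $s$ and the $\Delta_2$ constant.
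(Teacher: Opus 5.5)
Your proposal is correct and follows essentially the same route as the paper. The lower bound comes from restricting the integral to a pseudohyperbolic disk $E_r(z)$ on which $|u_z|\approx 1$. The upper bound comes from convexity with $a=|u_z|/2^{s}\le 1$, the upper type $p$ (equivalently $\Delta_2$) estimate for $\Psi(2^{s}\,\cdot)$, and the Forelli--Rudin integral, which is where $s>2$ is used.

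Your version is slightly tidier in two places. Your lower bound uses only $\Psi(at)\ge a\Psi(t)$ for $a\ge1$ rather than $\Delta_2$. You also explicitly rescale to absorb the constant that the paper silently drops when it passes from $\int_{\mathbb D}\Psi(|u_z|/\lambda)\,{\rm d}A\lesssim 1$ to $\|u_z\|_{A^{\Psi}(\mathbb D)}\le\lambda$.
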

\begin{proof}
Fix $0<r<1$. For every Orlicz function  $\Psi$, we have
$$ 1\geq\int _ { \mathbb{D}}\Psi\left(\frac { |  u_{z}(\omega) | } { \| u_{z} \| _ { A ^{\Psi} (\mathbb{D} ) } } \right)  {\rm d}A(\omega)\geq  \int _ { E_{r}(z)}\Psi\left(\frac { | u_{z}(\omega) | } { \| u_{z}\| _ { A ^{\Psi} (\mathbb{D} ) } } \right)  {\rm d}A(\omega).$$
For $\omega\in E_{r}(z)$, it follows from \eqref{eq: kernel-estimate} that
 $$| u_{z}(\omega) |=\bigg(\frac{1-|z|^{2}}{|1-\bar{z}\omega|}\bigg)^{s}\approx\bigg(\frac{1-|z|^{2}}{1-|z|^{2}}\bigg)^{s}=1.$$
Applying \eqref{eq: area-estimate}, we obtain
\begin{align*}
  \int _ { E_{r}(z)}\Psi\left(\frac { | u_{z}(\omega) | } { \| u_{z}\| _ { A ^{\Psi} (\mathbb{D} ) } } \right)  {\rm d}A(\omega)
  &\approx\int _ { E_{r}(z)}\Psi\left(\frac { 1 } { \| u_{z}\| _ { A ^{\Psi} (\mathbb{D} ) } } \right)  {\rm d}A(\omega)\\
  &\approx\Psi\left(\frac { 1 } { \| u_{z}\| _ { A ^{\Psi} (\mathbb{D} ) } } \right)(1-|z|^{2})^{2}.
\end{align*}
Hence
$$\Psi\left(\frac { 1 } { \| u_{z}\| _ { A ^{\Psi} (\mathbb{D} ) } } \right)\lesssim \frac{1}{(1-|z|^{2})^{2}}.$$
That is,  $$\| u_{z}\| _ { A ^{\Psi} (\mathbb{D} ) }\gtrsim\frac{1}{ \Psi^{-1}\big(\frac{1}{(1-|z|^{2})^{2}}\big)}.$$

On the other hand, since $|1-\bar{z}\omega| \geq 1-|z||\omega| \geq1-|z|$, we have
$\frac{1-|z|^{2}}{|1-\bar{z}\omega|}\leq\frac{1-|z|^{2}}{1-|z|}=1+|z|\leq2$. Let $$\lambda=\frac{1}{ \Psi^{-1}\big(\frac{1}{(1-|z|^{2})^{2}}\big)}.$$
By the convexity of $\Psi$ and the positive upper type $p$ condition, we have
\begin{align*}
\int _ { \mathbb{D}}\Psi\left(\frac { |  u_{z}(\omega) | } {\lambda } \right) {\rm d}A(\omega)
   &=\int _ { \mathbb{D}}\Psi\bigg[\Big(\frac{1-|z|^{2}}{2|1-\bar{z}\omega|} \Big)^{s} 2^{s}\Psi^{-1}\Big(\frac{1}{(1-|z|^{2})^{2}}\Big)\bigg]{\rm d}A(\omega)\\
   &\leq\int _ { \mathbb{D}} \Big(\frac{1-|z|^{2}}{2|1-\bar{z}\omega|} \Big)^{s} \Psi\bigg[ 2^{s}\Psi^{-1}\Big(\frac{1}{(1-|z|^{2})^{2}}\Big)\bigg]{\rm d}A(\omega)\\
   &\lesssim \int _ { \mathbb{D}} \Big(\frac{1-|z|^{2}}{2|1-\bar{z}\omega|} \Big)^{s}2^{ps} \Psi\bigg[ \Psi^{-1}\Big(\frac{1}{(1-|z|^{2})^{2}}\Big)\bigg]{\rm d}A(\omega)\\
   &\approx (1-|z|^{2})^{s-2} \int _ { \mathbb{D}} \frac{1}{|1-\bar{z}\omega|^{s}} {\rm d}A(\omega)\\
   &\thickapprox (1-|z|^{2})^{s-2} (1-|z|^{2})^{-s+2}\\
   &= 1.
\end{align*}
This yields $$\| u_{z}\| _ { A ^{\Psi} (\mathbb{D} ) }\leq \lambda=\frac{1}{ \Psi^{-1}\big(\frac{1}{(1-|z|^{2})^{2}}\big)}.$$
Therefore
$$\| u_{z}\| _ { A ^{\Psi} (\mathbb{D} ) }\thickapprox\frac{1}{ \Psi^{-1}\big(\frac{1}{(1-|z|^{2})^{2}}\big)}.$$
\end{proof}

We next study several properties of functions in $A^{\Psi}(\mathbb{D})$, which can be viewed as extensions of the corresponding properties of functions in Bergman spaces.

\begin{lem}\label{le: properties-of-functions}
Let $\Psi$ be an Orlicz function and $f \in H(\mathbb{D})$. Then the following assertions hold:

{\rm (i)} If $0<r<1$, then there exists a constant $C=C_{r}>0$ such that
$$\Psi(|f(a)|)\leq\frac{ C }{A( E_{r}(a) )} \int _ { E_{r}(a) } \Psi(|f|)  {\rm d} A.$$

{\rm (ii)} If $0 <r_{1}<r_{2}< 1$, $\Psi$ satisfies the $\Delta_{2}$-condition. Then there exists a constant $C=C(r_{1},r_{2})>0$ such that $$\Psi(| f (a) - f ( z ) |) \leq C\frac { \rho ( a , z ) } {( 1-|a|^{2} )^{2}} \int _ { E _ { r_{2}} (a) } \Psi(| f | ){\rm d} A,     \,\,\,\,\,\,\, z\in E _ { r_{1}} (a).$$
\end{lem}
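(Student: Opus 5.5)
For (i) I will use subharmonicity. Since $f$ is holomorphic, $|f|$ is subharmonic. Because $\Psi$ is convex and non-decreasing, $\Psi(|f|)$ is also subharmonic: a convex non-decreasing function of a subharmonic function is subharmonic.

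The sub-mean value property on Euclidean disks then gives the estimate. By Section 2.1, $E_r(a)$ is a Euclidean disk, but its center is $c=\frac{1-r^2}{1-|a|^2r^2}a$, not $a$. So I will not average over $E_r(a)$ directly. Instead I will take the Euclidean disk $D(a,\delta)$ with $\delta=c_r(1-|a|^2)$, choosing $c_r$ small enough that $D(a,\delta)\subset E_r(a)$. This is possible by \eqref{eq: ratio-estimate}, or simply because $E_r(a)$ contains a Euclidean disk about $a$ of radius comparable to $1-|a|$. Then
$$\Psi(|f(a)|)\le \frac{1}{\delta^2}\int_{D(a,\delta)}\Psi(|f|)\,{\rm d}A\le \frac{C_r}{(1-|a|^2)^2}\int_{E_r(a)}\Psi(|f|)\,{\rm d}A,$$
and \eqref{eq: area-estimate} converts $(1-|a|^2)^2$ into $A(E_r(a))$. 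This proves (i), with no growth condition needed.

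For (ii) I will first reduce to $a=0$ by Möbius invariance. Set $g=f\circ\varphi_a$ and $w=\varphi_a(z)$, so that $|w|=\rho(a,z)<r_1$. The change of variables $\zeta=\varphi_a(\omega)$ has Jacobian $|\varphi_a'|^2\approx(1-|a|^2)^{-2}$ on $E_{r_2}(a)$, by \eqref{eq: kernel-estimate}. Hence
$$\int_{|\zeta|<r_2}\Psi(|g|)\,{\rm d}A\approx \frac{1}{(1-|a|^2)^2}\int_{E_{r_2}(a)}\Psi(|f|)\,{\rm d}A.$$
It therefore suffices to show that for $|w|<r_1$,
$$\Psi(|g(0)-g(w)|)\le C|w|\int_{|\zeta|<r_2}\Psi(|g|)\,{\rm d}A.$$

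To prove this reduced inequality, pick $r_3$ with $r_1<r_3<r_2$. By the mean value theorem along the segment from $0$ to $w$,
$$|g(0)-g(w)|\le |w|\sup_{|\zeta|\le r_1}|g'(\zeta)|\le C|w|\sup_{|\zeta|\le r_3}|g(\zeta)|,$$
where the second inequality is the Cauchy estimate. Now apply part (i), or the sub-mean value property directly, to $\Psi(|g|)$ at points $|\zeta|\le r_3$, using disks of fixed radius $r_2-r_3$. This gives
$$\Psi\Big(\sup_{|\zeta|\le r_3}|g|\Big)\le C\int_{|\zeta|<r_2}\Psi(|g|)\,{\rm d}A=:CI.$$

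The remaining task is to bring the factor $|w|$ outside $\Psi$. Since $\Psi$ is convex with $\Psi(0)=0$, we have $\Psi(\lambda t)\le\lambda\Psi(t)$ for $\lambda\le1$. With $t=\sup_{|\zeta|\le r_3}|g|$ and $|w|<1$,
$$\Psi(|g(0)-g(w)|)\le\Psi\big(C|w|t\big)\le |w|\,\Psi(Ct)\le C'|w|\,\Psi(t)\le C''|w|\,I.$$
The step $\Psi(Ct)\le C'\Psi(t)$ is where the $\Delta_2$-condition enters, through \eqref{eq: remark}. (If $C|w|>1$ one can first absorb constants, since $|w|<r_1$ and \eqref{eq: remark} handles any bounded multiplier.) Undoing the change of variables then gives (ii).

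The main obstacle is this final step. A power-type estimate would simply give $|w|^p$, but in the Orlicz setting the factor $\rho(a,z)$ must be extracted from inside $\Psi$ using only convexity. Because of that, one can only get a linear factor $\rho(a,z)$, and the constants must be controlled via $\Delta_2$.
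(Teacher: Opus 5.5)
Your proof is correct and follows essentially the same approach as the paper. For (i) you use the sub-mean-value property together with the convexity of $\Psi$. For (ii) you use a bound on $|f(a)-f(z)|$ that is linear in $\rho(a,z)$, then $\Psi(\rho s)\le\rho\,\Psi(s)$ for $\rho<1$, and the $\Delta_{2}$-condition to absorb constants. The differences are minor. In (i) you average over a Euclidean disk about $a$ instead of transporting $E_r(0)$ by $\varphi_a$. In (ii) you derive the Lipschitz-type bound yourself, via the M\"obius reduction, the Cauchy estimate and a sup-norm bound controlled by (i). The paper instead quotes the $L^1$ Bergman estimate $|f(a)-f(z)|\lesssim\frac{\rho(a,z)}{(1-|a|^2)^2}\int_{E_{r_2}(a)}|f|\,{\rm d}A$ without proof and then applies Jensen, so your version is more self-contained.
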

\begin{proof}
{\rm (i)}
For any $ f \in H(\mathbb{D})$, the function $|f\circ\varphi_{a}|$ is subharmonic. Hence
$$|f\circ\varphi_{a}(0) | \leq\frac { 1 } { A( E_{r}(0) ) } \int _ { E_{r}(0) } | f\circ\varphi_{a}(z) |  {\rm d} A(z) .$$
By the non-decreasing property of $\Psi$ and Jensen's inequality, we have
\begin{align*}
\Psi(|f\circ\varphi_{a}(0)  |) &\leq \Psi\bigg(\frac { 1 } { A( E_{r}(0) ) } \int _ { E_{r}(0) } | f\circ\varphi_{a}(z) |  {\rm d} A(z)\bigg)\\
                      &\leq \frac { 1 } { A( E_{r}(0) ) } \int _ { E_{r}(0) } \Psi(| f\circ\varphi_{a}(z) | ) {\rm d} A(z).
\end{align*}
Let $\omega=\varphi_{a}(z)$. Applying \eqref{eq: area-estimate} and \eqref{eq: kernel-estimate}, we obtain
\begin{align*}
\Psi(|f(a)|) &\leq \frac { 1 } { A( E_{r}(0) ) } \int _ { E_{r}(a) } \Psi(|f(\omega)|) |\varphi_{a}^{'}(\omega)|^{2} {\rm d} A(\omega)\\
           &= \frac { 1 } { A( E_{r}(0) ) } \int _ { E_{r}(a) } \Psi(|f(\omega)|) \frac{(1-|a|^{2})^{2}}{|1-\bar{a}\omega|^{4}} {\rm d} A(\omega)\\
          &\thickapprox\frac{ 1 }{A( E_{r}(a) )} \int _ { E_{r}(a) } \Psi(|f(\omega)|)  {\rm d} A(\omega).
\end{align*}
Thus
\begin{align*}
\Psi(|f(a)|)&\leq\frac{ C }{A( E_{r}(a) )} \int _ { E_{r}(a) } \Psi(|f(\omega)|)  {\rm d} A(\omega),
\end{align*}
where $C$ is a constant depending only on $r$.

{\rm (ii)}
Fix $z\in E _ { r_{1}} (a)$. Since $0\leq\rho< 1$, by the non-decreasing property and convexity of $\Psi$, Jensen's inequality, and the $\Delta_{2}$-condition, we obtain
\begin{align*}
\Psi(| f (a) - f (z) |)  & \leq\Psi\bigg( C(r_{1},r_{2})  \frac { \rho ( a , z ) } {( 1-|a|^{2} )^{2}} \int _ { E _ { r_{2}} (a) } | f | {\rm d} A  \bigg)\\
                          & \lesssim C(r_{1},r_{2})\frac { \rho ( a , z ) } {( 1-|a|^{2} )^{2}} \int _ { E _ { r_{2}} (a) }\Psi( | f | ){\rm d} A .
\end{align*}
So
$$\Psi(| f (a) - f ( z ) |) \leq C\frac { \rho ( a , z ) } {( 1-|a|^{2} )^{2}} \int _ { E _ { r_{2}} (a) } \Psi(| f| ){\rm d} A .$$
\end{proof}

Let $\varphi\in S(\mathbb{D})$, $f\in H(\mathbb{D})$, the connection between composition operators and Carleson measures comes from the standard identity
\begin{align}\label{eq: variable-change formula}
\int _ { \mathbb{D} } \Psi( | f \circ \varphi (z) |) {\rm d} A  (z) = \int _ { \mathbb{D} } \Psi(| f (z) | )  {\rm d}( A  \circ \varphi ^{- 1})  (z),
\end{align}
where $ A  \circ \varphi ^{- 1}$ denotes the pullback measure defined by $ (A  \circ \varphi ^{- 1})(E)=A(\varphi ^{- 1}(E))$ for Borel sets $E\subset\mathbb{D} $.

The following two lemmas play an important role in the proof of Theorem 1.1 and Theorem 1.2.

\begin{lem}\label{le: Orlicz-Carleson}
Let $\Psi$ be an Orlicz function satisfying the $\Delta_{2}$-condition, $\varphi \in S(\mathbb{D})$ and let $H: \mathbb{D} \to [0, 1]$ be a Borel function. For any $\varepsilon> 0$ and set $\gamma=\frac{1}{2}$, if
$$\sup_ { z \in \mathbb { D } } \left[ H (z) R _ { \varphi } (z) \right] \leq\varepsilon ,$$
then there exists a constant $C>0$ such that
$$\int _ { \mathbb { D } } \Psi(| f \circ \varphi |)  H  {\rm d} A  \leq C \varepsilon^{\gamma} \int _ { \mathbb { D } } \Psi(| f |)  {\rm d} A  $$
 for any $f\in A^{\Psi}(\mathbb{D})$.
\end{lem}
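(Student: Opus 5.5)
The plan is to avoid splitting $\mathbb D$ into regions. Instead I would use the hypothesis pointwise as
$$H(z)\le H(z)^{1/2}\le \bigl(\varepsilon/R_\varphi(z)\bigr)^{1/2},\qquad z\in\mathbb D,$$
which uses $0\le H\le 1$ and $H R_\varphi\le\varepsilon$. This is where $\gamma=\frac12$ comes from. It gives
$$\int_{\mathbb D}\Psi(|f\circ\varphi|)\,H\,{\rm d}A\le \varepsilon^{1/2}\int_{\mathbb D}\Psi(|f(\varphi(z))|)\,(1-|\varphi(z)|^{2})^{1/2}\,(1-|z|^{2})^{-1/2}\,{\rm d}A(z).$$
So it suffices to bound the right-hand integral by $C\int_{\mathbb D}\Psi(|f|)\,{\rm d}A$, with $C$ independent of $f$ and $\varepsilon$. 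Note that the weight $(1-|z|^2)^{-1/2}$ is integrable, since $-\frac12>-1$.

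Next I would pass to a pullback measure. Let $\mu=\big((1-|z|^2)^{-1/2}{\rm d}A\big)\circ\varphi^{-1}$. By the change of variables \eqref{eq: variable-change formula}, now applied with this weighted measure, the right-hand integral equals $\int_{\mathbb D} v(w)(1-|w|^2)^{1/2}\,{\rm d}\mu(w)$, where $v=\Psi(|f|)$. The key input is that $\mu$ is a Carleson measure for the standard weighted Bergman space $A^1_{-1/2}$, i.e.
$$\mu(W(\xi,h))\lesssim h^{3/2}\quad\text{and hence}\quad \mu(E_r(\zeta))\lesssim (1-|\zeta|^2)^{3/2}.$$
This is the weighted analogue of the boundedness of every $C_\varphi$ on $A^p_\alpha$ for $\alpha>-1$, which follows from Littlewood subordination. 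I would cite it as the classical fact (MacCluer--Shapiro) that $(1-|z|^2)^{\alpha}{\rm d}A\circ\varphi^{-1}$ is an $(\alpha+2)$-Carleson measure. Alternatively, I would apply subordination directly to the subharmonic functions $|1-\bar\zeta w|^{-m}$ to estimate the window measures.

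Then I would use Lemma \ref{le: properties-of-functions}(i) together with \eqref{eq: area-estimate}:
$$v(w)(1-|w|^2)^{1/2}\le C(1-|w|^2)^{-3/2}\int_{E_r(w)}v\,{\rm d}A.$$
Integrating against $\mu$, applying Fubini, and using the symmetry $\zeta\in E_r(w)\iff w\in E_r(\zeta)$ together with $1-|w|^2\approx1-|\zeta|^2$ from \eqref{eq: ratio-estimate}, I get
$$\int v(w)(1-|w|^2)^{1/2}\,{\rm d}\mu(w)\lesssim\int_{\mathbb D}v(\zeta)\,\frac{\mu(E_r(\zeta))}{(1-|\zeta|^2)^{3/2}}\,{\rm d}A(\zeta)\lesssim\int_{\mathbb D}\Psi(|f|)\,{\rm d}A.$$
Combined with the first display, this gives the lemma with $\gamma=\frac12$. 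The constant depends only on $\varphi$ (through the Carleson constant of $\mu$, essentially on $|\varphi(0)|$) and on $r$.

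The main obstacle is justifying the Carleson estimate $\mu(E_r(\zeta))\lesssim(1-|\zeta|^2)^{3/2}$ uniformly in $\zeta$, since this is the only point where the geometry of $\varphi$ enters. My first attempt would be to test the weighted Littlewood subordination inequality
$$\int_{\mathbb D} u\circ\varphi\,(1-|z|^2)^{-1/2}\,{\rm d}A\le C_\varphi\int_{\mathbb D} u\,(1-|w|^2)^{-1/2}\,{\rm d}A$$
for nonnegative subharmonic $u$ on $u=|1-\bar\zeta w|^{-7}$. This $u$ is $\approx(1-|\zeta|^2)^{-7}$ on $E_r(\zeta)$, and its weighted integral is $\approx(1-|\zeta|^2)^{-7+3/2}$. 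Together these give exactly the required bound.

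Notably, this approach never invokes the $\Delta_2$-condition. It uses only that $\Psi(|f|)$ is subharmonic, via Lemma \ref{le: properties-of-functions}(i). So the $\Delta_2$ hypothesis in the statement would be needed only if one also wants to convert the resulting modular estimate into norm statements via Lemma \ref{le: norm-convergence}.
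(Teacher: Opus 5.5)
Your proof is correct. Its core is the same as the paper's: both use $H\le H^{1/2}\le(\varepsilon/R_\varphi)^{1/2}$ to reduce the lemma to the fact that $A_{-1/2}\circ\varphi^{-1}$ is a $\frac32$-Carleson measure.

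You differ from the paper in how that fact becomes the modular inequality.

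\textbf{The paper's route.} It works with Carleson windows for $\mu_H=(H\,{\rm d}A)\circ\varphi^{-1}$. On $\varphi^{-1}(W(\xi,h))$ it bounds $(1-|\varphi|^2)^{1/2}\le(2h)^{1/2}$, which gives
$$\mu_H(W(\xi,h))\lesssim\varepsilon^{1/2}h^{1/2}\,(A_{-1/2}\circ\varphi^{-1})(W(\xi,h))\lesssim\varepsilon^{1/2}h^2 .$$
The window bound for $A_{-1/2}\circ\varphi^{-1}$ comes from the Orlicz characterization \cite[Theorem 3.3]{Ch1}, applied to the weighted space $A^\Psi_{-1/2}$. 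The paper then appeals to the Carleson embedding for $A^\Psi$.

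\textbf{Your route.} You keep the factor $(1-|w|^2)^{1/2}$ as a weight on the target side. You prove the embedding directly from Lemma \ref{le: properties-of-functions}(i) and Fubini over pseudohyperbolic disks. You get $\mu(E_r(\zeta))\lesssim(1-|\zeta|^2)^{3/2}$ from weighted Littlewood subordination tested on $|1-\bar\zeta w|^{-7}$.

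\textbf{Comparison.} Your route is self-contained, and it makes explicit a point the paper leaves implicit: the modular bound is linear in the Carleson constant of the measure. That linearity is what lets $\varepsilon^{1/2}$ pass through with a constant depending only on $\varphi$ and $r$. The paper's displayed final inequality even carries a stray factor $(2h)^{\gamma}$ at this step. Your argument also confirms your observation that the lemma holds for every Orlicz function, without the $\Delta_2$-condition. The paper's route is shorter given the cited Orlicz--Carleson theory, but it relies on that theory for both the window bound and the embedding.
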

\begin{proof}
For any Borel set  $E\subset\mathbb { D } $, let $\mu(E)=\int_{\varphi^{-1}(E)} H {\rm d} A$. By \eqref{eq: variable-change formula}, we have$$\int _ { \mathbb { D }} \Psi(| f \circ \varphi |)  H  {\rm d} A  =\int _ { \mathbb { D }} \Psi(|f|)  H  {\rm d} A \circ \varphi ^{-1}=\int _ { \mathbb { D }} \Psi(|f|) {\rm d} \mu.$$
Let $W=W(\xi,h)$, taking 1 to be an upper bound of $H$, then we have
\begin{align*}
  \mu(W) & = \int_{\varphi^{-1}(W)} H(z) {\rm d} A(z)\\
   & \leq \varepsilon^{\gamma}\int_{\varphi^{-1}(W)} \bigg(\frac{1-|\varphi(z)|^{2}}{1-|z|^{2}}\bigg)^{\gamma} {\rm d} A(z).
\end{align*}
Note that $z\in \varphi^{-1}(W)$, so $1-h<|\varphi(z)|<1$, i.e.,  $1-|\varphi(z)|<h$, and $1-|\varphi(z)|^{2}<2h$. Therefore
\begin{align*}
  \mu(W) & \leq  \varepsilon^{\gamma} (2h)^{\gamma} \int_{\varphi^{-1}(W)} (1-|z|^{2})^{-\gamma} {\rm d} A(z)\\
   & =c_{\gamma}\varepsilon^{\gamma} (2h)^{\gamma} \int_{\varphi^{-1}(W)}  {\rm d} A_{-\gamma}(z)\\
   & = c_{\gamma}\varepsilon^{\gamma} (2h)^{\gamma} (A_{-\gamma}\circ\varphi^{-1}) (W).
\end{align*}
Hence
$$\frac{\mu(W)}{h^{2}}\leq c_{\gamma}\varepsilon^{\gamma} (2h)^{\gamma} \frac{(A_{-\gamma}\circ\varphi^{-1}) (W)}{h^{2}}.$$

Since $C_{\varphi}$ is bounded on $A_{-\gamma}^{\Psi}(\mathbb { D })$ and $\Psi$ satisfies the $\Delta_{2}$-condition, it follows from {\rm\cite[Theorem 3.3]{Ch1}} that $A_{-\gamma}\circ\varphi^{-1}$  is a Carleson measure.
Consequently, there exists a constant $C_{1}>0$ such that $$\int _ { \mathbb { D }} \Psi(|f|) {\rm d} \mu\leq C_{1} c_{\gamma}(2h)^{\gamma} \varepsilon^{\gamma} \int _ { \mathbb { D } } \Psi(| f |)  {\rm d} A.$$
Thus
 $$\int _ { \mathbb { D } } \Psi(| f \circ \varphi |)  H  {\rm d} A  \leq C \varepsilon^{\gamma} \int _ { \mathbb { D } } \Psi(| f |)  {\rm d} A  ,$$
where the constant $C$ depends only on $\gamma$.
\end{proof}

\begin{lem}\label{le: symbols-difference}
Let $\Psi$ be an Orlicz function satisfying the $\Delta_{2}$-condition, $\varphi, \psi \in S(\mathbb{D})$ and $K \subset \mathbb{D}$ is a Borel set. If
$$\sup _ { z \in K } M _ { \varphi , \psi } (z) \leq \varepsilon  \,\,\, for\,\, some \,\,\, \varepsilon > 0, $$
then there exists a constant $h(\varepsilon) > 0$, such that $\lim\limits _ { \varepsilon\to 0 } h ( \varepsilon ) = 0$   and
$$\int _ { K }\Psi(| f \circ \varphi - f \circ \psi | ) {\rm d} A  \leq h ( \varepsilon) \int _ { \mathbb{D}}\Psi(| f |) {\rm d} A $$
for any $f\in A^{\Psi}(\mathbb{D})$.
\end{lem}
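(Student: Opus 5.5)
We follow the strategy of Choe--Koo--Wang for $A^p_\alpha$, transplanted to $\Psi$ via Lemma \ref{le: properties-of-functions} and Lemma \ref{le: Orlicz-Carleson}. Fix $0<r<1$ and a threshold $\delta=\delta(\varepsilon)\in(0,r)$ to be chosen later, with $\delta\to0$ as $\varepsilon\to0$. Split $K=K_1\cup K_2$, where
$$K_1=\{z\in K:\rho_{\varphi,\psi}(z)<\delta\},\qquad K_2=\{z\in K:\rho_{\varphi,\psi}(z)\geq\delta\}.$$
On $K_2$ the hypothesis $M_{\varphi,\psi}\le\varepsilon$ gives $R_\varphi+R_\psi\le \varepsilon/\delta$. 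We bound $\Psi(|f\circ\varphi-f\circ\psi|)$ by convexity and the $\Delta_2$-condition:
$$\Psi(|f\circ\varphi-f\circ\psi|)\le \tfrac12\Psi(2|f\circ\varphi|)+\tfrac12\Psi(2|f\circ\psi|)\lesssim \Psi(|f\circ\varphi|)+\Psi(|f\circ\psi|).$$
We then apply Lemma \ref{le: Orlicz-Carleson} twice, with $H=\chi_{K_2}$, once for $\varphi$ and once for $\psi$; in each case $\sup HR\le\varepsilon/\delta$. This gives
$$\int_{K_2}\Psi(|f\circ\varphi-f\circ\psi|)\,{\rm d}A\lesssim (\varepsilon/\delta)^{1/2}\int_{\mathbb D}\Psi(|f|)\,{\rm d}A.$$

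On $K_1$ we use Lemma \ref{le: properties-of-functions}(ii) with $a=\varphi(z)$, $r_1=\delta<r_2=r$. Because $\psi(z)\in E_\delta(\varphi(z))$, this yields
$$\Psi(|f(\varphi(z))-f(\psi(z))|)\lesssim \frac{\rho_{\varphi,\psi}(z)}{(1-|\varphi(z)|^2)^2}\int_{E_r(\varphi(z))}\Psi(|f|)\,{\rm d}A.$$
Here the constant must be kept uniform in $\delta$, which we get by taking $r_1=r/2$ fixed and only requiring $\delta\le r/2$. Integrating over $K_1$ and using Fubini, the right side becomes $\int_{\mathbb D}\Psi(|f(w)|)\,{\rm d}\mu(w)$ with
$$\mu(\cdot)=\int_{K_1}\frac{\rho_{\varphi,\psi}(z)\,\chi_{E_r(\varphi(z))}(\cdot)}{(1-|\varphi(z)|^2)^2}\,{\rm d}A(z).$$
We then write $\rho_{\varphi,\psi}=\rho^{1/2}\rho^{1/2}$. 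On $K_1$, $\rho\le\delta$ trivially, and also $\rho\le M_{\varphi,\psi}/R_\varphi\le\varepsilon/R_\varphi$. Hence $\rho\le \min(\delta,\varepsilon/R_\varphi)$, so $\rho\,R_\varphi^{1/2}\le \delta^{1/2}\varepsilon^{1/2}$, with a similar bound for $R_\psi$.

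The aim is to show $\mu(W(\xi,h))\lesssim \eta(\varepsilon)h^2$ with $\eta(\varepsilon)\to0$, and then to invoke the Carleson embedding (Theorem 2.5 of \cite{Ch1}, quoted in Section 1) to get $\int\Psi(|f|)\,{\rm d}\mu\lesssim \eta(\varepsilon)\int\Psi(|f|)\,{\rm d}A$. The constant in this embedding must be linear in the Carleson norm. That holds for the $\Psi(|f|)$-integral form by the subharmonicity estimate of Lemma \ref{le: properties-of-functions}(i) combined with a covering argument, and we verify it directly that way rather than via Luxemburg norms.

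For the window estimate we note that $w\in E_r(\varphi(z))$ forces $\varphi(z)$ into an enlarged window $\widetilde W$ of size comparable to $h$. Using $(1-|\varphi(z)|^2)^2\approx A(E_r(\varphi(z)))$, we then get
$$\mu(W)\lesssim \int_{\varphi^{-1}(\widetilde W)\cap K_1}\rho_{\varphi,\psi}\,{\rm d}A.$$
Next we insert $1=R_\varphi^{\gamma}R_\varphi^{-\gamma}$ with $\gamma=1/2$, exactly as in the proof of Lemma \ref{le: Orlicz-Carleson}. This gives
$$\rho\le \rho R_\varphi^{1/2}\Big(\frac{1-|\varphi|^2}{1-|z|^2}\Big)^{1/2}\le (\delta\varepsilon)^{1/2}(2h)^{1/2}(1-|z|^2)^{-1/2},$$
and the Carleson property of $A_{-1/2}\circ\varphi^{-1}$ used there finishes this step. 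The result is $\mu(W)\lesssim(\delta\varepsilon)^{1/2}h^2$, and actually even $\delta h^2$ suffices, using only $\rho\le\delta$ and the Carleson property of $A\circ\varphi^{-1}$.

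Combining the two pieces gives
$$h(\varepsilon)\approx \delta+(\varepsilon/\delta)^{1/2},$$
and choosing $\delta=\min(\varepsilon^{1/2},r/2)$ yields $h(\varepsilon)\lesssim\varepsilon^{1/4}\to0$. The main obstacle I expect is the $K_1$ step. Lemma \ref{le: properties-of-functions}(ii) only gives a pointwise bound with constants that might depend on $r_1$, and the Carleson-embedding constant in the $\Psi$-integral (not norm) form must be controlled linearly. If the linearity fails, I would fall back on using the $\Delta_2$/upper type $p$ property to absorb the constants, at the cost of a worse power of $\varepsilon$ in $h(\varepsilon)$.
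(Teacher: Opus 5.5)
Your proposal is correct and follows the paper's proof essentially step for step. Both split $K$ at $\rho_{\varphi,\psi}=\delta$; on the far part both apply Lemma \ref{le: Orlicz-Carleson} with $H=\chi_{K\cap\{\rho_{\varphi,\psi}\geq\delta\}}$ to get $(\varepsilon/\delta)^{1/2}$; on the near part both use Lemma \ref{le: properties-of-functions}{(ii)} with fixed radii, Fubini, and the Carleson property of $A\circ\varphi^{-1}$ to get $\delta$; and both take $\delta\approx\sqrt{\varepsilon}$. Your window estimate and the Carleson-embedding step are unnecessary, and so is your worry about whether the embedding constant is linear: after Fubini your measure $\mu$ is absolutely continuous with density at most $C\delta\,(A\circ\varphi^{-1})(E_r(w))/A(E_r(w))\lesssim\delta$, which is exactly how the paper concludes directly.
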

\begin{proof}
Assume that $\sup\limits _ { z \in K } M _ { \varphi , \psi } (z) =\sup\limits  _ { z \in K }(R_{\varphi}(z)+R_{\psi}(z))\rho_{\varphi,\psi}(z)\leq \varepsilon$. For $\delta = \delta(\varepsilon )\in (0, \frac{1}{2})$ be chosen later, put
$$\Omega _ { \delta } = \left\{ z \in K : \rho_{\varphi,\psi}(z) < \delta \right\},  \,\,\,\, \Omega _ { \delta } ^{\prime}  = K \setminus \Omega _ { \delta } .$$
For any $ f \in A^{\Psi}(\mathbb{D})$, we write
\begin{align*}
  \int _ { K } \Psi(| f \circ \varphi(z) - f \circ \psi(z) | ) {\rm d} A(z)  & =I_{1}(f)+I_{2}(f),
\end{align*}
where $$I_{1}(f)=\int _ { \Omega _ { \delta } }\Psi(| f \circ \varphi(z) - f \circ \psi(z) | ) {\rm d} A(z) ,$$
 $$I_{2}(f)=\int _ { \Omega _ { \delta } ^{\prime} }\Psi(| f \circ \varphi(z) - f \circ \psi (z)| ) {\rm d} A(z). $$

We first estimate $I_{2}(f)$. For $z\in\Omega _ { \delta } ^{\prime}$, we have $\delta\chi_{\Omega_{\delta}^{\prime}}(z)\leq\rho_{\varphi,\psi}(z)$.
Hence
$$\chi _ { \Omega _ { \delta } ^{\prime} } (z) ( R_{\varphi}(z)+R_{\psi}(z)) \leq \frac{1}{\delta}( R_{\varphi}(z)+R_{\psi}(z))\rho_{\varphi,\psi}(z)\leq \frac { \varepsilon} { \delta }.$$
Let $$H(z)=\chi _ { \Omega _ { \delta } ^{\prime} } (z) .$$
So $$\sup_{z\in\mathbb{D}}H(z)R_{\varphi}(z)\leq\sup_{z\in\mathbb{D}}H(z)( R_{\varphi}(z)+R_{\psi}(z)) \leq \frac { \varepsilon} { \delta }.$$
Similarly, we can obtain $$\sup_{z\in\mathbb{D}}H(z)R_{\psi}(z) \leq \frac { \varepsilon} { \delta }.$$
Thus, by the convexity of $\Psi$, the $\Delta_{2}$-condition, and  Lemma \ref{le: Orlicz-Carleson}, we have
\begin{align}\label{eq: II-f}
  I_{2}(f) &=\int _ { \mathbb{D} }\Psi(| f \circ \varphi(z) - f \circ \psi(z) | )\chi _ { \Omega _ { \delta } ^{\prime} } (z){\rm d} A (z)  \nonumber\\
   &\leq \int _ { \mathbb{D} }\Psi(| f \circ \varphi(z) | +| f \circ \psi(z) | ) H (z){\rm d} A (z)  \nonumber\\
   & \lesssim \int _ {  \mathbb{D}}\Psi(| f \circ \varphi (z)| )H (z){\rm d} A(z) +\int _ {  \mathbb{D}} \Psi(| f \circ \psi(z) | )H(z) {\rm d} A(z)  \nonumber\\
   & \lesssim\left( \frac {\varepsilon } { \delta } \right) ^{\gamma} \int _ { \mathbb { D } } \Psi(| f(\omega) |)  {\rm d} A(\omega) .
\end{align}

Next, We estimate $I_{1}(f)$.
For $z\in\Omega _ { \delta }$, applying  Lemma \ref{le: properties-of-functions}{(ii)}  (put $r_1 = 1/2$ and $r_2 = 2/3$), we get
\begin{align*}
 I_{1}(f) & \lesssim \int _ { \Omega _ { \delta } }\frac { \rho_{\varphi,\psi}(z) } {( 1-|\varphi(z)|^{2} )^{2}} \int _ { E _ { 2 / 3} ( \varphi(z) ) } \Psi(| f(\omega) | ){\rm d} A (\omega) {\rm d} A (z)\\
   & < \delta\int _ { \mathbb{D} }\frac { \chi_{ \Omega _ { \delta }}(z) } {( 1-|\varphi(z)|^{2} )^{2}} \int _ {\mathbb{D}}\chi_ { E _ { 2 / 3} ( \varphi(z) ) } (\omega)\Psi(| f(\omega) | ){\rm d} A (\omega) {\rm d} A (z).
\end{align*}
Furthermore, by \eqref{eq: area-estimate}, \eqref{eq: kernel-estimate} and {\rm Fubini's }theorem, we deduce that
\begin{align}\label{eq: I-f}
   I_{1}(f) & \lesssim \delta\int _ { \mathbb{D} }\frac { \chi_{ \Omega _ { \delta }}(z)  } {( 1-|\omega|^{2} )^{2}} \int _ {\mathbb{D}}\chi_ { \varphi^{-1}( E _ { 2 / 3} (\omega))} (z)\Psi(| f(\omega) | ){\rm d} A (\omega) {\rm d} A (z)   \nonumber\\
   & =\delta\int _ { \mathbb{D} }\frac {\Psi(| f(\omega) | )} {( 1-|\omega|^{2} )^{2}} \int _ { \Omega _ { \delta }\cap\varphi^{-1}( E _ { 2 / 3} (\omega))} {\rm d} A (z){\rm d} A (\omega)     \nonumber\\
   &\lesssim\delta\int _ { \mathbb{D} }\Psi(| f(\omega)| )  \frac{(A\circ\varphi^{-1})( E _ { 2 / 3} (\omega))}{A( E _ { 2 / 3} (\omega))}d A (\omega)       \nonumber\\
   &\leq\delta\sup_{\omega\in\mathbb{D}} \frac{(A\circ\varphi^{-1})( E _ { 2 / 3} (\omega))}{A( E _ { 2 / 3} (\omega))}\int _ { \mathbb{D} }\Psi(| f(\omega) | ) {\rm d} A (\omega)       \nonumber\\
   &\lesssim\delta\int _ { \mathbb{D} }\Psi(| f(\omega) | ) {\rm d} A (\omega).
\end{align}
The last inequality holds due to $C_{\varphi}$ is bounded on $A^{\Psi}(\mathbb{D})$ and $\Psi$ satisfies the $\Delta_{2}$-condition, by Theorem $\mathrm {A}$, we have $A\circ\varphi^{-1}$ is a Carleson measure.
So
$$\sup_{\omega\in\mathbb{D}} \frac{(A\circ\varphi^{-1})( E _ { 2 / 3} (\omega))}{A( E _ { 2 / 3} (\omega))}<\infty.$$

 \eqref{eq: II-f} together with \eqref{eq: I-f} yields
\begin{align*}
 \int _ { K } \Psi(| f \circ \varphi(z) - f \circ \psi(z) | ) {\rm d} A(z)  &\leq C [( \frac {\varepsilon } { \delta } ) ^{\gamma}+\delta ]\int _ { \mathbb { D } } \Psi(| f(\omega) |)  {\rm d} A(\omega) .
\end{align*}
Taking $\delta=\frac{\sqrt{\varepsilon}}{1+2\sqrt{\varepsilon}}$, $h(\varepsilon)=C [( \frac {\varepsilon } { \delta } ) ^{\gamma}+\delta ]$, we have  $h(\varepsilon)\rightarrow0$ as $\varepsilon\rightarrow0$, and
$$\int _ { K } \Psi(| f \circ \varphi(z) - f \circ \psi(z) | ) {\rm d} A(z) \leq h(\varepsilon)\int _ { \mathbb { D } } \Psi(| f(\omega) |)  {\rm d} A(\omega). $$
\end{proof}

The following lemma plays a vital role in the proof of Theorem 1.2.
\begin{lem}\label{le: vanishing-Carleson-measure}
Let $\Psi$ be an Orlicz function satisfying the $\Delta_{2}$-condition, $\varphi, \psi \in S(\mathbb{D})$ and $H : \mathbb{D} \to [0,1]$ be a Borel function. For every Borel set $E\subset \mathbb{D}$, set
$$\mu (E) =  \int _ { \varphi ^{- 1} (E) }\Psi ( M _ { \varphi, \psi  } )  H {\rm d} A +  \int _ { \psi ^{- 1}(E) } \Psi ( M _ { \varphi, \psi  } )  H  {\rm d} A $$
and
$$\nu (E) = \int _ { \varphi ^{- 1} (E)} \Psi ( \rho _ { \varphi, \psi  } ) H  {\rm d} A  + \int _ { \psi ^{- 1} (E)} \Psi ( \rho _ { \varphi, \psi  } ) H  {\rm d} A .$$
 If $\mu$ is a vanishing Carleson measure on $\mathbb{D}$, then so is $\nu$.
\end{lem}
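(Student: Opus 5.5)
The plan is to compare $\nu$ with $\mu$ window by window. On the part of $\mathbb{D}$ where $R_\varphi+R_\psi$ is not small, $\Psi(\rho_{\varphi,\psi})$ is dominated by $\Psi(M_{\varphi,\psi})$. On the complementary part, the small Julia--Carathéodory quotient makes the pullback mass small, exactly as in the proof of Lemma \ref{le: Orlicz-Carleson}.

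Fix $\delta\in(0,1)$ and split $\mathbb{D}=U_\delta\cup V_\delta$, where
$$U_\delta=\{z\in\mathbb{D}: R_\varphi(z)+R_\psi(z)\ge \delta\},\qquad V_\delta=\mathbb{D}\setminus U_\delta .$$
Write $\nu=\nu_1+\nu_2$, where $\nu_1$ integrates over $U_\delta$ and $\nu_2$ over $V_\delta$ (in both the $\varphi^{-1}(E)$ and $\psi^{-1}(E)$ terms).

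\emph{Estimate on $U_\delta$.} For $z\in U_\delta$ we have $\rho_{\varphi,\psi}(z)\le M_{\varphi,\psi}(z)/\delta$. Since $\Psi$ is non-decreasing and satisfies the $\Delta_2$-condition, \eqref{eq: remark} with $B=1/\delta$ gives $\Psi(\rho_{\varphi,\psi})\le C_\delta\Psi(M_{\varphi,\psi})$ there. Hence $\nu_1(E)\le C_\delta\,\mu(E)$ for every Borel set $E$.

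\emph{Estimate on $V_\delta$.} Here $R_\varphi<\delta$ and $R_\psi<\delta$. Since $\rho_{\varphi,\psi}<1$ and $H\le 1$,
$$\nu_2(E)\le \Psi(1)\Big[\int_{\varphi^{-1}(E)}\chi_{V_\delta}\,{\rm d}A+\int_{\psi^{-1}(E)}\chi_{V_\delta}\,{\rm d}A\Big].$$
Put $H_1=\chi_{V_\delta}$, so that $\sup_{z}H_1R_\varphi\le\delta$ and $\sup_z H_1R_\psi\le\delta$. Repeat the computation in the proof of Lemma \ref{le: Orlicz-Carleson} for a window $W=W(\xi,h)$:
$$\int_{\varphi^{-1}(W)}H_1\,{\rm d}A\le \delta^{\gamma}(2h)^{\gamma}c_\gamma\,(A_{-\gamma}\circ\varphi^{-1})(W),\qquad \gamma=\tfrac12 .$$
The measure $A_{-\gamma}\circ\varphi^{-1}$ is a Carleson measure for the weighted space; this is the same fact used in Lemma \ref{le: Orlicz-Carleson}, coming from \cite[Theorem 3.3]{Ch1}. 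Its window bound should be of order $h^{2-\gamma}$, so the right-hand side is at most $C\delta^{\gamma}h^{2}$ with $C$ independent of $\xi$ and $h$. The same holds for $\psi$. Therefore
$$\sup_{\xi\in\mathbb{T}}\frac{\nu_2(W(\xi,h))}{h^2}\le C\Psi(1)\,\delta^{1/2}.$$

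\emph{Conclusion.} Combining the two estimates,
$$\sup_{\xi}\frac{\nu(W(\xi,h))}{h^2}\le C_\delta\sup_\xi\frac{\mu(W(\xi,h))}{h^2}+C\delta^{1/2}.$$
Let $h\to0$ using that $\mu$ is vanishing Carleson, and then let $\delta\to0$; this shows that $\nu$ is a vanishing Carleson measure.

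The main obstacle is the second step. We need a window bound that is uniform in $\xi$ and $h$ and carries the explicit factor $\delta^{1/2}$. This requires the correct Carleson normalization for $A_{-\gamma}\circ\varphi^{-1}$, namely mass of order $h^{2-\gamma}$ on $W(\xi,h)$, so that the factor $h^\gamma$ compensates. If that normalization is awkward to extract from \cite{Ch1}, the fallback is to verify it directly. By Littlewood subordination applied to the standard weighted Bergman space $A^2_{-\gamma}$ (with weight exponent $-\gamma>-1$), $C_\varphi$ is bounded there. The classical Carleson characterization for that space then gives $(A_{-\gamma}\circ\varphi^{-1})(W(\xi,h))\lesssim h^{2-\gamma}$.
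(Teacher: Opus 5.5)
Your proof is correct. Its first half is exactly the paper's: the split according to whether $R_\varphi+R_\psi$ is small, and the bound $\nu_1\le C_\delta\,\mu$ via \eqref{eq: remark}. Where you differ is the piece on which $R_\varphi+R_\psi$ is small.

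The paper tries to show that this piece, $\nu_\varepsilon$, is itself a vanishing Carleson measure. It passes to the compact-embedding characterization and applies Lemma~\ref{le: Orlicz-Carleson} to a normalized sequence $f_n\to0$. This gives $\int\Psi(|f_n|)\,{\rm d}\nu_\varepsilon\le 2\Psi(1)C_1\varepsilon^{\gamma}$, and the paper then concludes ``by the arbitrariness of $\varepsilon$''. That last step does not work as written. The measure $\nu_\varepsilon$ changes with $\varepsilon$, and the bound is uniform in $n$ rather than tending to $0$.

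Indeed, that argument never uses the hypothesis on $\mu$, and without it $\nu_\varepsilon$ need not be vanishing. For example, take $\varphi(z)=(1+z)/2$, $\psi\equiv0$, $H\equiv1$. The windows $W(e^{-i\sqrt h},h)$ then satisfy $\nu_\varepsilon(W)\gtrsim\varepsilon h^2$, because the thin region just inside the horocycle $\partial\varphi(\mathbb{D})$ where $R_\varphi\le\varepsilon$ carries a fixed proportion $\approx\varepsilon$ of the pullback mass.

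What the paper's computation really yields is that the Carleson constant of the small-$R$ part is $O(\varepsilon^{1/2})$. That is precisely what you extract, directly on windows, and you take the limits in the right order: $h\to0$ first, then $\delta\to0$. Your route also makes explicit the normalization $(A_{-1/2}\circ\varphi^{-1})(W(\xi,h))\lesssim h^{3/2}$, from Littlewood subordination on $A^2_{-1/2}$ plus the classical Carleson characterization. The proof of Lemma~\ref{le: Orlicz-Carleson} leaves this implicit, and without it the stray factor $(2h)^{\gamma}$ is not absorbed.

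The paper's embedding route can be repaired at the level of functions instead. Bound $\int\Psi(|f_n|)\,{\rm d}\nu\le\int\Psi(|f_n|)\,{\rm d}\nu_\varepsilon+C_\varepsilon\int\Psi(|f_n|)\,{\rm d}\mu$, take $\limsup_n$ (the $\mu$-term vanishes since $\mu$ is vanishing Carleson), and then let $\varepsilon\to0$. Your window argument is more elementary and avoids the embedding characterization altogether.
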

\begin{proof}
For $0<\varepsilon<1$, let $K_{\varepsilon}=\{z\in\mathbb{D}: R_{\varphi}(z)+R_{\psi}(z)\leq\varepsilon\}$. For any Borel set $E\subset \mathbb{D}$, put
\begin{align*}
  \int _ { \varphi ^{- 1} (E)} \Psi ( \rho _ { \varphi, \psi  } ) H {\rm d} A  & =\int _ { \varphi ^{- 1} (E)\cap K_{\varepsilon}} \Psi ( \rho _ { \varphi, \psi  } ) H  {\rm d} A +\int _ { \varphi ^{- 1} (E)\setminus K_{\varepsilon}} \Psi ( \rho _ { \varphi, \psi  } ) H  {\rm d} A  \\
   & =\int _ {\varphi^{- 1} (E) } \chi_{ K_{\varepsilon}}\Psi ( \rho _ { \varphi, \psi  } ) H  {\rm d} A+\int _ { \varphi ^{- 1} (E)\setminus K_{\varepsilon}} \Psi ( \rho _ { \varphi, \psi  } ) H  {\rm d} A .
\end{align*}

For $z\notin K_{\varepsilon}$, we have $R_{\varphi}(z)+R_{\psi}(z)>\varepsilon$, so $ \rho _ { \varphi,\psi }(z) <\frac{M _ { \varphi,\psi }(z)}{\varepsilon}$ . Since $\Psi$ is non-decreasing, it follows that $\Psi ( \rho _ { \varphi,\psi } ) <\Psi ( \frac{M _ { \varphi,\psi }}{\varepsilon})$. By the $\Delta_{2}$-condition, there exists a constant $C_{\varepsilon}>0$ such that
\begin{align*}
 \int _ { \varphi ^{- 1} (E)\setminus K_{\varepsilon}} \Psi ( \rho _ { \varphi, \psi  } ) H  {\rm d} A & <\int _ { \varphi ^{- 1} (E)\setminus K_{\varepsilon}} \Psi \left( \frac{M _ { \varphi,\psi }}{\varepsilon}\right) H  {\rm d} A\\
   & \leq C_{\varepsilon}\int _ { \varphi ^{- 1} (E)}\Psi(M_{\varphi,\psi}) H  {\rm d} A.
\end{align*}
Hence
\begin{align*}
  \int _ { \varphi ^{- 1} (E)}\Psi ( \rho _ { \varphi, \psi  } ) H  {\rm d} A  & \leq\int _ {\varphi^{- 1} (E) } \chi_{ K_{\varepsilon}}\Psi ( \rho _ { \varphi, \psi  } )H  {\rm d} A  +C_{\varepsilon}\int _ { \varphi ^{- 1} (E)}\Psi(M_{\varphi,\psi})H  {\rm d} A.
\end{align*}
By interchanging the roles of $\varphi$ and $\psi$, we obtain
\begin{align*}
  \int _ { \psi ^{- 1} (E)} \Psi ( \rho _ { \varphi, \psi  } ) H  {\rm d} A  & \leq\int _ {\psi^{- 1} (E) } \chi_{ K_{\varepsilon}}\Psi ( \rho _ { \varphi, \psi  } ) H  {\rm d} A  +C_{\varepsilon}\int _ { \psi ^{- 1} (E)}\Psi(M_{\varphi,\psi}) H  {\rm d} A.
\end{align*}

Let
$$\nu_{\varepsilon}(E) =\int _ {\varphi^{- 1} (E) } \chi_{ K_{\varepsilon}}\Psi ( \rho _ { \varphi, \psi  } ) H  {\rm d} A +\int _ {\psi^{- 1} (E) } \chi_{ K_{\varepsilon}}\Psi ( \rho _ { \varphi, \psi  } ) H  {\rm d} A .$$
Therefore
$$\nu(E) \leq\nu_{\varepsilon}(E) +C_{\varepsilon}\mu (E) .$$
Taking $E=W(\xi,h)$ in the above inequality, we obtain
$$\frac{\nu(W(\xi,h))}{h^{2}} \leq\frac{\nu_{\varepsilon}(W(\xi,h))}{h^{2}}+C_{\varepsilon}\frac{\mu (W(\xi,h))}{h^{2}}.$$
Since $\mu$ is a vanishing Carleson measure on $\mathbb{D}$, we get
\begin{align*}
\lim_{h\rightarrow0}\sup_{\xi\in\mathbb{T}}\frac{\nu(W(\xi,h))}{h^{2}} &\leq\lim_{h\rightarrow0}\sup_{\xi\in\mathbb{T}}\frac{\nu_{\varepsilon}(W(\xi,h))}{h^{2}}.
\end{align*}

It remains to show that $\lim_{h\rightarrow0}\sup_{\xi\in\mathbb{T}}\frac{\nu_{\varepsilon}(W(\xi,h))}{h^{2}}=0$,
that is, $\nu_{\varepsilon}$ is a vanishing Carleson measure. Since $\nu_{\varepsilon}$ is a vanishing Carleson measure if and only if the embedding $I_{\nu_{\varepsilon}} :A^{\Psi}(\mathbb{D})\rightarrow L^{\Psi}({\rm d}\nu_{\varepsilon})$ is compact. To prove the compactness of $I_{\nu_{\varepsilon}} $, it suffices to show that for any $ \{f_{n}\}$ in $A^{\Psi}(\mathbb{D})$ such that $\|f_{n}\|_{A^{\Psi}(\mathbb{D})}\leq1$ and $f_{n}\to0$ uniformly on compact sets of $\mathbb{D}$, we have $\|I_{\nu_{\varepsilon}} f_{n}\|_{L^{\Psi}({\rm d}\nu_{\varepsilon})}=\| f_{n}\|_{L^{\Psi}({\rm d}\nu_{\varepsilon})}\rightarrow0 \,\,\,\,as \,\,\,n\rightarrow\infty$.
By Lemma \ref{le: norm-convergence}, it remains to prove that $\int _ {\mathbb{D}}\Psi(|f_{n}|) {\rm d}\nu_{\varepsilon}\rightarrow0 \,\,\, (n\rightarrow\infty)$.

Since $0\leq \rho<1$ and $\Psi$ is non-decreasing, we obtain
\begin{align*}
  \int _ {\mathbb{D}}\Psi(|f_{n}|) {\rm d}\nu_{\varepsilon} & =\int _ {\mathbb{D}}\Psi(|f_{n}|)\bigg(\int _ {\varphi^{- 1} (E) } \chi_{ K_{\varepsilon}}\Psi ( \rho _ { \varphi,\psi } )H  {\rm d}A +\int _ {\psi^{- 1} (E) } \chi_{ K_{\varepsilon}}\Psi ( \rho _ { \varphi,\psi } )H  {\rm d} A \bigg)\\
   & \leq\Psi(1)\bigg[\int _ {\mathbb{D}}\Psi(|f_{n}|) \bigg( \chi_{ K_{\varepsilon}}H  {\rm d} (A\circ \varphi^{- 1})+\chi_{ K_{\varepsilon}}H  {\rm d}(A \circ\psi^{- 1}) \bigg)\bigg] \\
   & =\Psi(1)\bigg[\int _ {\mathbb{D}}\Psi(|f_{n}|) \chi_{ K_{\varepsilon}}H  {\rm d} (A\circ \varphi^{- 1})+\int _ {\mathbb{D}}\Psi(|f_{n}|)\chi_{ K_{\varepsilon}}H  {\rm d} (A \circ\psi^{- 1})\bigg]  \\
   & =\Psi(1)\bigg[\int _ {\mathbb{D}}\Psi(|f_{n}\circ \varphi|) \chi_{ K_{\varepsilon}}H  {\rm d} A+\int _ {\mathbb{D}}\Psi(|f_{n}\circ\psi|)\chi_{ K_{\varepsilon}}H  {\rm d} A\bigg] \\
   & =\Psi(1)(J_{1}+J_{2}).
\end{align*}
For $J_{1}$. Let $H_{1}=\chi_{ K_{\varepsilon}}H\leq1 $. So
$$\sup_{z\in\mathbb{D}}H_{1}(z)R_{\varphi}(z)\leq\sup_{z\in K_{\varepsilon}}R_{\varphi}(z)\leq\sup_{z\in K_{\varepsilon}}(R_{\varphi}(z)+R_{\psi}(z))\leq\varepsilon.$$
By Lemma \ref{le: Orlicz-Carleson}, there exists a constant $C_{1}>0$ such that
\begin{align*}
  J_{1} & =\int _ {\mathbb{D}}\Psi(|f_{n}\circ \varphi|) H_{1}  {\rm d} A \\
   & \leq C_{1}\varepsilon^{\gamma}\int _ {\mathbb{D}}\Psi(|f_{n}|) {\rm d} A \\
   & \leq C_{1}\varepsilon^{\gamma}.
\end{align*}
Meanwhile, we get $$J_{2}\leq C_{1}\varepsilon^{\gamma}.$$
Therefore $$\int _ {\mathbb{D}}\Psi(|f_{n}|) {\rm d}\nu_{\varepsilon}\leq 2\Psi(1)C_{1}\varepsilon^{\gamma}.$$
By the arbitrariness of $\varepsilon$, we have  $\int _ {\mathbb{D}}\Psi(|f_{n}|) {\rm d}\nu_{\varepsilon}\rightarrow 0$ as $n\to\infty$.
Hence, $\nu$ is a vanishing Carleson measure on $\mathbb{D}$.
\end{proof}

\section{ Julia-Carathéodory type characterization}
In this section, we prove Theorem 1.1. We first introduce some notations. Let
\begin{align}\label{eq: T-J}
T _ { J }  &= \sum _ { i \in J } a _ { i } T _ { i }
\end{align}
for $J \subset \Lambda_N$. In view of \eqref{eq: operator-T}, the operators  $T_J$ can be rewritten as
\begin{align}\label{eq: operators-T}
 T _ { J } = \sum _ { i , k \in J } c _ { i , k } T _ { i , k } + \left( \sum _ { i \in J } a _ { i } \right) T _ { \ell },
\end{align}
where $\ell\in J$.

Since $\Psi$ is an Orlicz function satisfying the $\Delta_{2}$-condition, there exists a constant $C=C_{N}>0$ such that
\begin{align}\label{eq: subadditivity}
  \Psi(\sum_{i=1}^{N} t_{i}) &\leq C \sum_{i=1}^{N}\Psi( t_{i})
\end{align}
for all $t_{i}>0,i\in\Lambda_N$.

\vskip.2cm

\textbf{Proof of Theorem 1.1}.
First, proof of ${\rm (a)}\Rightarrow {\rm (b)}$.
Assume that $T$ is compact on $A^{\Psi}(\mathbb{D}) $, let $$f_{z}=\frac {u_{z} } { \|u_{z}\|_{ A^{\Psi}(\mathbb{D}) }  },$$
 then $f_{z}\in A^{\Psi}(\mathbb{D}) $ and $\|f_{z}\|_{A^{\Psi}(\mathbb{D}) } =1 $, i.e., $\{f_{z}\}$ is bounded on $A^{\Psi}(\mathbb{D}) $.

Since $\frac { \Psi ( x ) } { x } \rightarrow\infty $ as $x \to \infty$. Let $t=\Psi(x)$, we get $\frac{\Psi^{-1}(t)}{t}\rightarrow0$ as $t \to \infty$. So
$$(1-|z| ^{2})^{2} \Psi^{-1}\big(\frac{1}{(1-|z|^{2})^{2}}\big) \rightarrow 0\ \ as \ \ |z| \rightarrow 1.$$
Meanwhile, $u_{z}$ is bounded on compact subsets of $\mathbb{D}$. Hence,
$$\frac {|u_{z}| } { \|u_{z}\|_{ A^{\Psi}(\mathbb{D}) }  }\lesssim(1-|z| ^{2})^{s} \Psi^{-1}\bigg(\frac{1}{(1-|z|^{2})^{2}}\bigg)$$  holds on compact subsets of  $\mathbb{D}$.
It follows from $s>2$ that $\{f_{z}\}$ converges to $0$ uniformly on compact subsets of  $\mathbb{D}$  as $|z| \rightarrow 1$. According to Lemma \ref{le: sequence-compact},  $\|Tf_{z}\|_{A^{\Psi}(\mathbb{D}) } \to 0$. i.e., $$\lim_{|z|\to1}\frac{\|Tu_{z}\|_{A^{\Psi}(\mathbb{D})}}{\|u_{z}\|_{ A^{\Psi}(\mathbb{D}) }}=0.$$

Next, proof of ${\rm (b)}  \Rightarrow {\rm (c)}$.  Assume that ${\rm (b)}$ holds. We suppose that ${\rm (c)}$ fails and will derive a contradiction.

Since ${\rm (c)}$ fails, there exists a sequence $\{z_n\} \subset \mathbb{D}$ such that $|z_n| \to 1$ and
$$\inf _ { n } Q ( z _ { n } ) > 0 .$$
Since the functions $Q_{\eta}$ are bounded on $\mathbb{D}$ and $Q ( z _ { n } ) =\prod_{\eta \in \mathscr{ P } _ { N }}Q_{\eta}(z_n)$, we obtain
\begin{align}\label{eq: inf-Q}
\inf _ { n } Q _ { \eta } ( z _ { n } ) > 0 ,   \,\,\,\,\,\,\,\,\eta \in \mathscr{ P } _ { N }.
\end{align}
Furthermore, we suppose
$\{ R _ { i } ( z _ { n } ) \} $ and $ \{ \rho _ { i , k } ( z _ { n } ) \} $  both converge as $n \to \infty$, for every $i,k\in\Lambda_{N}$. Let
\begin{align}\label{eq: define-F}
F = \left\{ i \in \Lambda _ { N } :\lim _ { n \to \infty } R _ { i } ( z _ { n } ) = 0 \right\}
\end{align}
and
$$F ^{\prime}  = \Lambda _ { N } \setminus F.$$
Assume that
\begin{align}\label{eq: inf-R}
\inf_ { n } R _ { i } ( z _ { n } ) > 0
\end{align}
for each $i \in F'$.

We interrupt the proof to show that the set $F'$ satisfies the following claim.

{\bf Claim}: There exist pairwise disjoint sets of indices $J_1, \ldots, J_L$ and a subsequence of $\{z_n\}$ satisfies the following properties:

{\rm (i)} $F'=\bigcup\limits_{m=1}^{L}J_{m}$;

{\rm (ii)} For each $m=1,\ldots,L $, there exists $k_m\in J_m $ such that for all $n $ and all $i \in \bigcup\limits _ { \ell = m } ^{L} J _ { \ell } $,
$$| \varphi _ { i } ( z _ { n } ) | \leq | \varphi _ { k _ { m } } ( z _ { n } ) |; $$

{\rm (iii)} $\lim\limits _{n \rightarrow \infty} \rho_{i, k}\left(z_{n}\right)=0$  only if $i, k \in J_{m}$ for  $m=1, \ldots, L$;

{\rm (iv)} $\sum\limits_{i \in J_{m}} a_{i}=0$ for all $m=1, \ldots, L$.

Proof of the Claim. The proofs of {\rm (i)}-{\rm (iii)} can be found in \cite{CKW}. It remains only to prove {\rm (iv)}. We first consider the case $m=1$. Assume that $\sum\limits_{i \in J_{1}} a_{i}\neq0$. Then we will reach a contradiction.  Let
$$f _ { a } = \Psi^{-1}\bigg( \frac{1}{(1 - | a | ^{2})^{2}}\bigg )u_{a}, \,\,\,\, a\in\mathbb{D}.$$
Set $$w _ { n } = \varphi _ { k _ { 1 } } ( z _ { n } )$$
for each $n$.  By \eqref{eq: inf-R}, we have $1 - |w_n|^2 \approx 1 - |z_n|^2$. Hence, $|w_n| \to 1$. Applying Lemma \ref{le: test-function}, we obtain
\begin{align*}
  \|  f _ { w _ { n } } \| _ { A ^{\Psi}(\mathbb{D}) }
   = \Psi^{-1}\bigg( \frac{1}{(1 - | w _ { n } | ^{2})^{2}}\bigg )\|  u_{w _ { n }} \| _ { A^{\Psi}(\mathbb{D}) }
    \approx1 .
\end{align*}
By Lemma \ref{le: test-function} and (b), we derive
\begin{align*}
\lim _ { n \to \infty } \| T f _ { w _ { n } } \| _ { A ^{\Psi}(\mathbb{D}) }
 &= \lim _ { n \to \infty } \| T u_{w _ { n }} \| _ { A ^{\Psi}(\mathbb{D}) }\Psi^{-1}\bigg( \frac{1}{(1 - | w _ { n } | ^{2})^{2}}\bigg )\\
  &\thickapprox \lim _ { n \to \infty } \frac{\| T u_{w _ { n }} \| _ { A ^{\Psi}(\mathbb{D}) }}{\|  u_{w _ { n }} \| _ { A ^{\Psi}(\mathbb{D}) }}=0.
\end{align*}
 Applying Lemma \ref{le: properties-of-functions} {\rm (i)}, we have
\begin{align*}
\Psi(| Tf _ { w _ { n } } ( z_{n} ) |) &\lesssim \frac { 1 } { ( 1 - | z_{n} |^{2}  ) ^{2} } \int _ { E_{r}(z_{n})  } \Psi(| Tf _ { w _ { n }} |) {\rm d}A \\
                                       &\lesssim\frac { 1 } { ( 1 - |w _ { n } |^{2}  ) ^{2} } \int _ { \mathbb{D}  } \Psi(| Tf _ { w _ { n }} |) {\rm d}A .
\end{align*}
Furthermore, Lemma \ref{le: norm-convergence} gives
\begin{align}\label{eq: lim-T}
\lim _ { n \to \infty }( 1 - | w _ { n }|^{2}  ) ^{2} \Psi(| Tf _ { w _ { n } } ( z_{n} ) |) = 0.
\end{align}

For $i,k_{1} \in J _ { 1 }$, by Claim {\rm (iii)}, we have $\lim \limits_{n \rightarrow \infty} \rho_{i, k_{1}}\left(z_{n}\right)=0$.
It follows from \eqref{eq: ratio-estimate} that
 $$1 - | w _ { n } | ^{2}= 1- |\varphi_{k_{1}}(z_{n}) |^{2} \approx1 - | \varphi _ { i } ( z _ { n } ) | ^{2} .$$
Let  $0<r_{1}<r_{2}<1$. By Lemma \ref{le: properties-of-functions} {\rm (ii)}, we have
\begin{align*}
\Psi(|T _ { i,k  }f _ { w _ { n } } ( z _ { n } ) |)
     & \lesssim \frac{\rho_{i,k} (z _ { n })}{(1 - | \varphi _ { i } ( z _ { n } ) | ^{2})^{2}} \int _ { E _ { r_{2}} ( \varphi_ {i }( z _ { n }) ) } \Psi(| f _ { w _ { n } }| ){\rm d} A \\
    & \lesssim \frac{\rho_{i,k} (z _ { n }) }{(1 - | w _ { n }| ^{2})^{2}} \int _ { \mathbb{D} } \Psi(| f _ { w _ { n } }| ){\rm d} A.
\end{align*}
Since $\|  f _ { w _ { n } } \| _ { A ^{\Psi}(\mathbb{D}) } \thickapprox 1$, Claim {\rm (iii)} implies that
\begin{align}\label{eq: T-ik}
( 1 - | w _ { n }|^{2}  ) ^{2} \Psi(| T_{i,k}f _ { w _ { n } } ( z_{n} ) |)  & \lesssim \rho_{i,k} (z _ { n })\to 0
\end{align}
for $i,k \in J _ { 1 }$.

Replacing $J$ and $\ell$ in \eqref{eq: operators-T} by $J_{1}$ and $k_{1}$, respectively, and using the convexity of $\Psi$ together with the $\Delta_{2}$ -condition, we deduce that
\begin{align*}
\Psi(|T _ { J_{1} }  f _ { w _ { n } } ( z _ { n } )|) &= \Psi(|\sum _ { i , k \in J_{1} } c _ { i ,k } T _ { i , k } f _ { w _ { n } } ( z _ { n } ) + \left( \sum _ { i \in J_{1} } a _ { i } \right) T _ { k_{1} } f _ { w _ { n } } ( z _ { n } )|)\\
     & \geq  \Psi\big( |\sum _ { i \in J_{1} } a _ { i } ||T _ { k_{1} } f _ { w _ { n } } ( z _ { n } )|-|\sum _ { i , k \in J_{1} }c _ { i ,k }||  T _ { i , k } f _ { w _ { n } } ( z _ { n } )|\big)  \\
    & \gtrsim \Psi\big( |\sum _ { i \in J_{1} } a _ { i } ||T _ { k_{1} } f _ { w _ { n } } ( z _ { n } )|\big)- \Psi\big(|\sum _ { i , k \in J_{1} }c _ { i ,k }||  T _ { i , k } f _ { w _ { n } } ( z _ { n } )|\big) .
\end{align*}
Since $\sum _ { i \in J_{1} } a _ { i }\neq0$, there exists a $\delta>0$ such that $\Big |\sum _ { i \in J_{1} } a _ { i } \Big|>\delta$. By \eqref{eq: remark}, there exists a constant $C_{\delta}>0$ such that
\begin{align*}
\Psi\big( |\sum _ { i \in J_{1} } a _ { i } ||T _ { k_{1} } f _ { w _ { n } } ( z _ { n } )|\big)
    & \geq C_{\delta}\Psi\big(|T _ { k_{1} } f _ { w _ { n } } ( z _ { n } )|\big) =C_{\delta}\frac{1}{( 1 - | w _ { n } | ^{2} ) ^{2}}.
\end{align*}
This yields
\begin{align*}
C_{\delta}\frac{1}{( 1 - | w _ { n } | ^{2} ) ^{2}} &\leq\Psi\big( |\sum _ { i \in J_{1} } a _ { i } ||T _ { k_{1} } f _ { w _ { n } } ( z _ { n } )|\big)\\
    &\lesssim \Psi(|T _ { J_{1} }  f _ { w _ { n } } ( z _ { n } )|)+\Psi\big(|\sum _ { i , k \in J_{1} }c _ { i ,k }||  T _ { i , k } f _ { w _ { n } } ( z _ { n } )|\big).
\end{align*}
Combining this with \eqref{eq: T-ik}, we obtain
\begin{align}\label{eq: T-J1}
  C_{\delta}  & \lesssim \lim_{n\to\infty} ( 1 - | w _ { n } | ^{2}) ^{2 }\Psi(|T _ { J_{1} }  f _ { w _ { n } } ( z _ { n } )|).
\end{align}

For $i\in \Lambda_{N}$, we get
\begin{align*}
    ( 1 - | w _ { n } | ^{2} ) ^{ 2} \Psi(| T _ { i } f _ { w _ { n } } ( z _ { n } ) |)
   & =( 1 - | w _ { n } | ^{2} ) ^{2} \Psi\bigg(\bigg|\Psi^{-1}\bigg(\frac{1}{( 1 - | w _ { n } | ^{2} ) ^{2}}\bigg)u_{w _ { n }}(\varphi_ {i }( z _ { n } ))\bigg|\bigg).
\end{align*}
In particular, when $i=k_{1}$, we have
 $$ ( 1 - | w _ { n } | ^{2} ) ^{ 2} \Psi(| T _ { k_{1} } f _ { w _ { n } } ( z _ { n } ) |)  =1.$$
 Since $|\varphi_ {i }( z _ { n } )|\leq 1$ for $i\in\Lambda_{N}$, we get $| 1 -  \overline { { w _ { n } } }\varphi _ { i } ( z _ { n } ) |\geq 1 - |\varphi _ { i } ( z _ { n } )|| w _ { n }|\geq1-| w _ { n } |\approx1-| w _ { n } |^{2}$. Thus $|u_{w _ { n }}(\varphi_ {i }( z _ { n } ))|=\left(\frac { 1 - | w _ { n } | ^{2} } { | 1 - \varphi _ { i } ( z _ { n } ) \overline { { w _ { n } } } |}\right) ^{s}\lesssim1$. By the convexity of $\Psi$, it follows that
\begin{align}\label{eq: T-i}
  ( 1 - | w _ { n } | ^{2} ) ^{ 2} \Psi(| T _ { i } f _ { w _ { n } } ( z _ { n } ) |)   & \lesssim( 1 - | w _ { n } | ^{2} ) ^{2}  \Psi\bigg(\Psi^{-1}\bigg(\frac{1}{( 1 - | w _ { n } | ^{2} ) ^{2}}\bigg)\bigg) |u_{w _ { n }}(\varphi_ {i }( z _ { n } ))|    \nonumber \\
   & =\bigg(\frac{1-|w _ { n } |^{2}}{|1-\overline{w _ { n } }\varphi _ { i } ( z _ { n } )|}\bigg)^{s} .
\end{align}

 Note that $| 1 -\overline { { w _ { n } } } \varphi _ { i } ( z _ { n } ) |\geq 1-|\varphi _ { i } ( z _ { n } )|$ for $i\in F$,
it follows from \eqref{eq: T-i} and the definition of $F$ that
\begin{align*}
  ( 1 - | w _ { n } | ^{2} ) ^{ 2} \Psi(| T _ { i } f _ { w _ { n } } ( z _ { n } ) |)
  & \lesssim \left( \frac { 1 - | w _ { n } | ^{2} } { 1 -  |\varphi _ { i } ( z _ { n } ) |^{2}  } \right) ^{ s} \\
   & \approx\left( \frac { 1 - | z _ { n } | ^{2} } { 1 -  |\varphi _ { i } ( z _ { n } ) |^{2}  } \right) ^{ s} = R _ {i } ^{s} ( z _ { n } ) \rightarrow  0.
\end{align*}
Applying \eqref{eq: T-J} and \eqref{eq: remark}, we conclude that
\begin{align*}
( 1 - | w _ { n } | ^{2} ) ^{ 2} \Psi(| T _ { F } f _ { w _ { n } } ( z _ { n } ) |) & =( 1 - | w _ { n } | ^{2} ) ^{ 2} \Psi(|  \sum _ { i \in F } a _ { i } T _ { i } f _ { w _ { n } } ( z _ { n } )|) \\
     & \lesssim \sum _ { i \in F } ( 1 - | w _ { n } | ^{2} ) ^{ 2} \Psi(| T _ { i } f _ { w _ { n } } ( z _ { n } ) |).
\end{align*}
Hence
\begin{align}\label{eq: T-F}
\lim_ { n \to \infty } ( 1 - | w _ { n } | ^{2} ) ^{ 2} \Psi(| T _ { F } f _ { w _ { n } } ( z _ { n } ) | ) = 0 .
\end{align}

Next, we consider the operators $T_{J_k}$, where $1 < k \leq L$. Claim (ii) yields $|\varphi_i(z_n)| \leq |w_n|$ for $i \in J_k$. So, by \eqref{eq: distance}, we have
\begin{align*}
  \left( \frac { 1 - | w _ { n } | ^{2} } { | 1 - \varphi _ { i } ( z _ { n } ) \overline { { w _ { n } } } | } \right) ^{2} & \leq \frac { ( 1 - | w _ { n } | ^{2} ) ( 1 - | \varphi _ { i } ( z _ { n } ) | ^{2} ) } { | 1 - \varphi _ { i } ( z _ { n } ) \overline { { w _ { n } } } | ^{2} }  \\
   & =\frac { ( 1 - | \varphi _ { k_{1} } ( z _ { n } ) | ^{2} ) ( 1 - | \varphi _ { i } ( z _ { n } ) | ^{2} ) } { | 1 - \varphi _ { i } ( z _ { n } ) \overline { {\varphi _ { k_{1}} ( z _ { n } ) } } | ^{2} }= 1 - \rho _ { k_{1} , i } ^{2} ( z _ { n } ).
\end{align*}
 From \eqref{eq: T-i}, we obtain
\begin{align*}
    ( 1 - | w _ { n } | ^{2} ) ^{ 2} \Psi(| T _ { i } f _ { w _ { n } } ( z _ { n } ) | )
   & \leq ( 1 - \rho _ {k_{1} , i } ^{2} ( z _ { n } ) )^{s/2}
\end{align*}
for $i \in J_k$.

Let $$\eta _ { 1 }  = \min\left[ \lim _ { n \to \infty } \rho _ { k _ { 1 } , i } ( z _ { n } ) \right] > 0,$$
where the minimum is taken over all $i \in \cup_{1 < k \leq L} J_k$.
 By \eqref{eq: remark} and \eqref{eq: T-J}, we have
\begin{align}\label{eq: T-Jk}
\limsup_{n\rightarrow\infty}( 1 - | w _ { n } | ^{2} ) ^{ 2} \sum_{1 < k \leq L} \Psi(| T _ { J _ { k } } f _ { w _ { n } } ( z _ { n } ) | ) &\lesssim \limsup_{n\rightarrow\infty} ( 1 - | w _ { n } | ^{2} ) ^{ 2} \Psi(|  T _ { i } f _ { w _ { n } } ( z _ { n } )|)  \nonumber \\
&\leq( 1 - \eta _ { 1 } ^{2} )^{s/2}.
\end{align}

Applying \eqref{eq: T-J} and Claim {\rm (i)}, we obtain  $$T _ { J _ { 1 } } = T - T _ { F } - \sum _ { 1 < k \leq L } T _ { J _ { k } } .$$
Moreover, \eqref{eq: subadditivity} and the convexity of $\Psi$ yield
\begin{align*}
( 1 - | w _ { n } | ^{2} ) ^{ 2} \Psi(| T _ { J _ { 1 } } f _ { w _ { n } } ( z _ { n } ) |) & =( 1 - | w _ { n } | ^{2} ) ^{ 2} \Psi(|  ( T - T _ { F } - \sum _ { 1 < k \leq L } T _ { J _ { k } } ) f _ { w _ { n } } ( z _ { n } )|) \\
       &\lesssim ( 1 - | w _ { n } | ^{2} ) ^{ 2} \bigg(\Psi(| Tf _ { w _ { n } } ( z _ { n } )|) +\Psi(|  T _ { F }f _ { w _ { n } } ( z _ { n } )|)  \\
       &+ \sum _ { 1 < k \leq L }\Psi(|  T _ { J _ { k } }  f _ { w _ { n } } ( z _ { n } )|)\bigg).
\end{align*}
It follows from \eqref{eq: lim-T}, \eqref{eq: T-J1}, \eqref{eq: T-F}, and \eqref{eq: T-Jk} that
$$C_{\delta} \lesssim( 1 - \eta _ { 1 } ^{2} )^{s/2} .$$
Taking the limit as $s \to \infty$, we arrive at
$$C_{\delta}=0.$$
This contradiction shows that
 $$\sum _ { i \in J _ { 1 } } a _ {i } = 0 .$$
Hence,  the proof is complete for the case $m=1$.

For the case $2 \leq m \leq L$, suppose that
$$\sum _ { i \in J _ { 1 } } a _ { i } = \cdots = \sum _ { i \in J _ { m - 1 } } a _ { i } = 0 .$$
It remains to show that
$$\sum _ { i \in J _ { m } } a _ { i } = 0. $$
Since the proof is  analogous to the case $m=1$, we omit the details. This completes the proof of the claim.

Based on the claim, we continue to verify the implication ${\rm (b)}  \Rightarrow {\rm (c)}$. One may assume that the sets $J_1,\ldots,J_L,F$ are all nonempty. Let $n_m$ and $d$ denote the number of elements in $J_m$ and $F$, respectively. Let
 $$\mathfrak  { j } _ { m }  = ( i _ { m , 1 } , \ldots , i _ { m , n _ { m } } ) ,  \qquad m = 1 , \ldots , L,$$
where $i_{m,1},\ldots,i_{m,n_m}$ are the distinct elements of $J_m$. Let $$\mathfrak { f }  = ( i _ { 1 } , \ldots , i _ { d } ),$$
where $i_{1},\ldots,i_{d}$ are the distinct elements of $F$. Constructing  $$\sigma = ( \mathfrak  { j } _ { 1 } , \ldots , \mathfrak { j } _ { L } , \mathfrak { f } )\in \mathscr{P}_N,$$
by \eqref{eq: define-Q}, we conclude that
 \begin{align*}
    Q _ { \sigma }  & = \sum _ { i = 1 } ^{N - 1} | s _ { i } ^{\sigma} | M _ { \sigma _ {i } ,\sigma _ { i + 1 } } + | s _ { N } ^{\sigma} | R _ { \sigma _ { N } } \\
    & = \bigg(\sum _ { i = 1 } ^{N - d} | s _ { i } ^{\sigma} | M _ { \sigma _ { i } , \sigma _ { i + 1 } }+\sum _ { i = N -d+1 } ^{N - 1} | s _ { i } ^{\sigma} | M _ { \sigma _ { i } , \sigma _ { i + 1 } }\bigg) + | s _ { N } ^{\sigma} | R _ { \sigma _ { N } } \\
    & =\sum _ { i = 1 } ^{N - d} | s _ { i } ^{\sigma} | M _ { \sigma _ { i } , \sigma _ { i + 1 } }+\bigg(\sum _ { i = N -d+1 } ^{N - 1} | s _ { i } ^{\sigma} | M _ { \sigma _ { i } , \sigma_ { i + 1 } } + | s _ { N } ^{\sigma} | R _ { \sigma _ { N } }\bigg).
 \end{align*}
 Applying Claim (iv) with $\sigma$, we deduce that
$$\sum _ { i = 1 } ^{n _ { 1} + \cdots + n _ { m } } a _ { \sigma _ { i } } = 0   \,\,\,\,\,\,\,   m=1,\ldots,L.$$
So
$$\sum _ { i = 1 } ^{N - d} \left| s _ { i } ^{\sigma} \right| M _ { \sigma _ { i } , \sigma _ { i + 1 } } = \sum _ { m = 1 } ^{L} \sum _ { i , k \in J _ { m } } c _ { i , k } ^{( m )} M _ { i , k },$$
where $c_{i,k}^{(m)}$ are nonnegative coefficients depending on $\{a_{\ell}:\ell\in J_{m}\}$.

On the other hand, noting that $M _ { \sigma _ { i } , \sigma _ { i + 1 } }=(R _ {\sigma_ { i } }+R _ { \sigma _ { i +1} })\rho_{\sigma_ { i } , \sigma _ { i + 1 }}$ and $0\leq\rho _ { \sigma _ { i } , \sigma _ { i + 1 } }<1$, so
 $$\sum _ { i = N - d + 1 } ^{N - 1} \left| s _ { i } ^{\sigma} \right| M _ {\sigma _ { i } , \sigma _ { i + 1 } } + \left| s _ { N } ^{\sigma} \right| R _ { \sigma _ { N } } \leq \sum _ { i \in F } c _ { i } R _ { i },$$
where $c_i$ are nonnegative coefficients depending on $\{a_{i_1},\ldots,a_{i_d}\}$.
Hence
$$Q _ { \sigma} (z_n)\leq\sum _ { m = 1 } ^{L} \sum _ { i , k \in J _ { m } } c _ { i , k } ^{( m )} M _ { i , k }(z_n) + \sum _ { i \in F } c _ { i } R _ { i }(z_n).$$
By Claim {\rm (iii)}, for $i,k\in J_{m}$, we have $\lim\limits _{n \rightarrow \infty} \rho_{i, k}(z_{n})=0$. From $M _ { i, k } ( z_{n} )  = [ R _ { i } ( z_{n} ) + R _ { k } ( z_{n} ) ] \rho _ { i , k } ( z_{n}) $, we obtain $\sum\limits _ { m = 1 } ^{L} \sum\limits _ { i , k \in J _ { m } } c _ { i , k } ^{( m )} M _ { i , k }(z_n)\rightarrow 0$.
Meanwhile, by \eqref{eq: define-F}, we conclude that $ \sum\limits _ { i \in F } c _ { i } R _ { i }(z_n)\rightarrow 0$.
So $$\lim _ { n \to \infty } Q _ { \sigma } ( z _ { n } ) = 0 .$$
This contradicts \eqref{eq: inf-Q}. Therefore  $\lim\limits_{|z|\to1}Q(z)=0$. This completes the proof.

Finally, we prove ${\rm (c)} \Rightarrow {\rm (a)}$.
Let $\{f_n\}$ be an arbitrary sequence in $A^{\Psi}(\mathbb{D})$ such that $\sup_{n} \|f_n\|_{A^{\Psi}(\mathbb{D})} \leq 1$ and $f_n\rightarrow0$ uniformly on compact subsets of  $\mathbb{D}$. By Lemma \ref{le: sequence-compact}, in order to prove $T$ is compact on $A^{\Psi}(\mathbb{D})$, it is enough to show that $\|Tf_n\|_{A^{\Psi}(\mathbb{D})}\rightarrow 0$. Moreover, by Lemma \ref{le: norm-convergence}, it suffices to prove that $\lim \limits_ { n \to \infty } \int _ { \mathbb{D} }\Psi ( | Tf _ { n }  | ) {\rm d}A = 0 $.

Fix $\varepsilon > 0$ and $\eta\in\mathscr{P}_N$, let
$$U _ {\eta, \varepsilon }  = \{ z \in \mathbb { D } : Q _ { \eta } (z) \leq \varepsilon \}.$$  Since $\lim\limits_{|z|\rightarrow1} Q(z)=0$, for any $\zeta\in\mathbb{T}$, there exists $h_\zeta=h_\zeta(\varepsilon)\in(0,1)$ such that
\begin{align}\label{eq: W-U}
W(\zeta,h_\zeta) \subset \bigcup _ { \eta \in \mathscr { P } _ { N } } U _ { \eta , \varepsilon } .
\end{align}
Otherwise, there would exist $\zeta_0 \in \mathbb{T}$ such that for any $\delta \in (0,1)$, the Carleson window $W(\zeta_0,\delta) $ contains a point $z_\delta\in \mathbb{D}$ satisfying  $Q_\eta(z_\delta) >\varepsilon$ for all $\eta \in \mathscr{P}_N$. Hence  $Q(z_\delta) > \varepsilon^{N!}$, which contradicts (c).

Since $\mathbb{T}$ is compact, we can find finitely many points $\zeta_{1},\ldots,\zeta_{j}\in \mathbb{T}$ such that
$$\mathbb{T}\subset  \bigcup _ { i = 1 } ^{j} B(\zeta_{i},h_{\zeta_{i}}),$$
where $B(\zeta_{i},h_{\zeta_{i}})$ denotes the Euclidean ball centered at $\zeta_{i}$ with radius $h_{\zeta_{i}}$. Set $r=\max_{i}(1-h_{i})$, where $h_{i}=h_{\zeta_{i}}$ and $0<h_{i}<1$. Then, for $ z\in\mathbb { D } \backslash r \mathbb { D }$, we have $|z|>r>1-h_{i}$. Thus, $z$ belongs to $W(\zeta _ { i },h_ { i } ) $ for some $1\leq i\leq j$. Hence $$\mathbb { D } \backslash r \mathbb { D } \subset \bigcup _ { i = 1 } ^{j} W(\zeta _ { i },h_ { i } ).$$
Combining this with \eqref{eq: W-U} yield
$$\mathbb { D } \backslash r \mathbb { D } \subset \bigcup _ { i = 1 } ^{j} W(\zeta _ { i },h_ { i } ) \subset \bigcup _ { \eta \in \mathscr { P } _ { N } } U _ { \eta , \varepsilon }.$$
Therefore
\begin{align*}
    \int _ { \mathbb{D} }\Psi ( | Tf _ { n }| )    {\rm d}A &=  \int _ { r\mathbb { D } } \Psi ( | Tf _ { n }  | )    {\rm d}A+ \int _ { \mathbb { D } \backslash r\mathbb { D }}\Psi ( | Tf _ { n }  | )    {\rm d}A  \\
  & \leq \int _ { r\mathbb { D } } \Psi ( | Tf _ { n }  | )    {\rm d}A+  \sum _ { \eta \in \mathscr { P } _ { N } }\int _ { U _ { \eta , \varepsilon }} \Psi ( | Tf _ { n }  | )    {\rm d}A\\
   & =I _ { 1 } + \sum _ { \eta \in \mathscr { P } _ { N } } I _ { 2 }.
\end{align*}

We now estimate $I _ { 1 }$ and $I _ { 2 }$, respectively.
Since $\bigcup_{i=1}^{N}\varphi_{i}(r\mathbb { D })$  is compact of $\mathbb { D }$,
and $\{f_{n}\}$ converges uniformly to 0 on $\bigcup_{i=1}^{N}\varphi_{i}(r\mathbb { D })$, we obtain
\begin{align}\label{eq: I-1}
I_{1}\rightarrow0 \quad  as  \quad  n\rightarrow\infty.
\end{align}
For $I _ { 2 } $,
by \eqref{eq: operator-T}, \eqref{eq: subadditivity} and the non-decreasing of $\Psi$, we arrive at
\begin{align*}
   I_{2} & \leq \int_{U_{\eta,\varepsilon}}\Psi\bigg(\sum_{i=1}^{N-1}|s_{i}^{\eta}||T_{\eta_{i},\eta_{i+1}}f _ { n } |+|s_{N}^{\eta}||T_{\eta_{N}}f _ { n } |\bigg) {\rm d}A \\
   & \lesssim\sum_{i=1}^{N-1}\int_{U_{\eta,\varepsilon}}\Psi\bigg(|s_{i}^{\eta}||T_{\eta_{i},\eta_{i+1}}f _ { n } |\bigg) {\rm d}A+\int_{U_{\eta,\varepsilon}}\Psi\bigg(|s_{N}^{\eta}||T_{\eta_{N}}f _ { n } |\bigg) {\rm d}A.
\end{align*}
Since $|s_{i}^{\eta}|>0$ for $1\leq i\leq N $, it follows from \eqref{eq: remark} that
\begin{align*}
    I_{2}&\lesssim\sum_{i=1}^{N-1}\int_{U_{\eta,\varepsilon}}\Psi\bigg(|T_{\eta_{i},\eta_{i+1}}f _ { n } |\bigg) {\rm d}A+\int_{U_{\eta,\varepsilon}}\Psi\bigg(|T_{\eta_{N}}f _ { n } |\bigg) {\rm d}A\\
   &=\sum_{i=1}^{N-1} \int_{U_{\eta,\varepsilon}} \Psi\bigg(\big|f_{n}\circ\varphi_{\eta_{i}}-f_{n}\circ\varphi_{\eta_{i+1}}\big|\bigg) {\rm d}A+\int_{U_{\eta,\varepsilon}}\Psi(|f_{n}\circ\varphi_{\eta_{N}}|) {\rm d}A\\
   &=\sum_{i=1}^{N-1} J_{1}+J_{2}.
\end{align*}

 We next estimate $J_{1}$ and $J_2$.
Since $Q_{\eta}(z)\leq\varepsilon$ for $ z\in U_{\eta,\varepsilon}$, it follows from \eqref{eq: define-Q} that $M_{\eta_{i},\eta_{i+1}}\lesssim\varepsilon$.
Let $K=U_{\eta,\varepsilon}$, we get $$\sup_{z\in K}M_{\eta_{i},\eta_{i+1}}\lesssim\varepsilon.$$ By Lemma \ref{le: symbols-difference}, there exists a constant
 $h_{i}(\varepsilon)>0$ such that $\lim\limits_{\varepsilon\rightarrow0}h_{i}(\varepsilon)=0$ and
\begin{align*}
    J_{1}& \leq  h_{i}(\varepsilon)\int_{\mathbb{D}}\Psi(|f_{n}|){\rm d}A\leq h_{i}(\varepsilon).
\end{align*}
For $J_{2}$, a similar argument gives $R_{\eta_{N}}\lesssim \varepsilon$. Set $H(z)=\chi_{U_{\eta,\varepsilon}}(z)$, then  $$\sup_{z\in\mathbb{D} }H(z)R_{\eta_{N}}(z)\leq\sup_{z\in U_{\eta,\varepsilon} }R_{\eta_{N}}(z)\lesssim\varepsilon.$$ By Lemma \ref{le: Orlicz-Carleson}, there exists a constant $C>0, \gamma=\frac{1}{2} $, independent of $\varepsilon$ such that
\begin{align*}
 J_{2}
   & = \int_{\mathbb{D} }\Psi(|f_{n}\circ\varphi_{\eta_{N}}|) H{\rm d}A \\
   & \leq  C \varepsilon^{\gamma}\int_{\mathbb { D }}\Psi(|f_{n}|) {\rm d}A  \leq  C \varepsilon^{\gamma}.
\end{align*}
Hence
\begin{align}\label{eq: I-2}
I _ { 2}  \leq\sum_{i=1}^{N-1}  h_{i}  ( \varepsilon ) + C\varepsilon ^{\gamma}.
\end{align}

It follows from \eqref{eq: I-1} and \eqref{eq: I-2} that
$$\limsup_{n\rightarrow\infty}\int_\mathbb { D }\Psi( |Tf_n|) {\rm d}A\leq\sum_{i=1}^{N-1}h_{i}  ( \varepsilon ) + C\varepsilon ^{\gamma}.$$
Taking the limit $\varepsilon\rightarrow0$, we obtain $\int_\mathbb { D }\Psi( |Tf_n|) {\rm d}A\rightarrow0$ as $n\rightarrow\infty.$
Therefore, $T$ is compact on $A^{\Psi}(\mathbb{D})$. This completes the proof of Theorem 1.1.


\section{Carleson measure characterization}
This section is devoted to the proof of Theorem 1.2. Recall from \eqref{eq: variable-change formula} that, for any positive Borel function $g$ on $\mathbb { D }$, the joint pullback measure is given by
\begin{align}\label{eq: measure}
  \int _ { \mathbb { D } } \Psi(|g| ) {\rm d} \mu _ {\Psi , \eta} & = \sum _ { i = 1 } ^{N-1} | s _ { i } ^{\eta} | \int _ {G _ { \eta } } \bigg[\Psi(|g\circ\varphi _ { \eta _ { i } } | )  +  \Psi(|g\circ\varphi _ { \eta _ { i+1 } } | )\bigg]\Psi( M_{\eta _ { i },\eta _ { i+1 }}) {\rm d}A \nonumber\\
   & +| s _ {N } ^{\eta} |\int _ { G _ { \eta } } \Psi(|g\circ\varphi _ { \eta_ { N } }| )\Psi( R _ { \eta _ { N } } )  {\rm d}A.
\end{align}

We now turn to the proof of the second main result.

\textbf{Proof of Theorem 1.2}.
Proof of ${\rm (b)}\Rightarrow{\rm (c)}$. Let $H=\chi_{G_{\eta}}$. Then,  for $1\leq i< N$ and any Borel set $E\subset\mathbb{D}$, we have
\begin{align*}
  \mu_{\Psi,\eta_i} (E)   & = \int _ { \varphi_{\eta _ { i }} ^{- 1} (E)\cap G_{\eta}}\Psi ( M _ { \eta _ { i },\eta _ { i+1 } } ) {\rm d} A +  \int _ { \varphi_{\eta_ { i+1 }} ^{- 1}(E)\cap G_{\eta} } \Psi ( M _ { \eta _ { i },\eta _ { i+1 } } ) {\rm d} A\\
  & =  \int _ { \varphi_{\eta _ { i }} ^{- 1} (E)}\Psi ( M _ { \eta _ { i },\eta _ { i+1 } } )H {\rm d} A +  \int _ { \varphi_{\eta_ { i+1 }} ^{- 1}(E)} \Psi ( M _ { \eta _ { i },\eta _ { i+1 } } )H {\rm d} A
\end{align*}
Since $\mu_{\Psi}$ is a vanishing Carleson measure on $\mathbb{D}$, it follows from  \eqref{eq: joint pullback measures} that
 $\mu_{\Psi,\eta_i}$ is a vanishing Carleson measure on $\mathbb{D}$.
Hence, by Lemma \ref{le: vanishing-Carleson-measure}, $\nu_{\Psi,\eta_i} $ is a vanishing Carleson measure on $\mathbb{D}$, where
\begin{align*}
  \nu_{\Psi,\eta_i} (E)   & = \int _ { \varphi_{\eta _ {i }} ^{- 1} (E)\cap G_{\eta}}  \Psi(\rho_{\eta _ { i },\eta _ { i +1}}) {\rm d} A +  \int _ { \varphi_{\eta _ { i+1 }} ^{- 1}(E)\cap G_{\eta} } \Psi(\rho_{\eta _ { i },\eta _ { i +1}}) {\rm d} A \\
    & = \int _ { \varphi_{\eta _ {i }} ^{- 1} (E)}  \Psi(\rho_{\eta _ { i },\eta _ { i +1}})H {\rm d} A +  \int _ { \varphi_{\eta _ { i+1 }} ^{- 1}(E)} \Psi(\rho_{\eta _ { i },\eta _ { i +1}})H {\rm d} A
\end{align*}
Therefore $$\nu_{\Psi}=\sum _ { \eta \in \mathscr { P } _ { N } } \nu _{\Psi,\eta} =\sum _ { \eta \in \mathscr { P } _ { N } }(\sum _ { i = 1 } ^{N} | s _ { i } ^{\eta} | \nu _ { \Psi,\eta _ { i } })$$ is a vanishing Carleson measure on $\mathbb{D}$.\\

Proof of ${\rm (c)}\Rightarrow{\rm (b)}$. Assume that $\nu_{\Psi}$ is a vanishing Carleson measure on $\mathbb{D}$. Note that
$$M_{\eta _ { i },\eta _ {i +1}}=(R _ { \eta _ { i } }+R _ { \eta _ { i +1}})\rho_{\eta _ { i },\eta_ { i +1}}$$ and  $R _ { i } (1\leq i< N)$ are bounded functions  on $\mathbb{D}$.
For $1\leq i< N$, by the non-decreasing of $\Psi$, we obtain
\begin{align*}
 \mu_{\Psi,\eta_i} (E)  &  = \int  _{\varphi _ { \eta _ { i } } ^{- 1} (E) \cap G _ { \eta } } \Psi(M_{\eta_ { i },\eta _ { i+1 }}){\rm d} A +\int  _{\varphi _ { \eta _ { i+1 } } ^{- 1} (E) \cap G _ { \eta } }\Psi( M_{\eta _ { i },\eta _ { i+1 }}) {\rm d} A \\
   &  \lesssim \int  _{\varphi _ { \eta _ { i } } ^{- 1} (E)\cap G _ { \eta } }\Psi(\rho_{\eta _ { i },\eta _ { i +1}}){\rm d} A +\int  _{\varphi _ { \eta _ { i+1 } } ^{- 1} (E) \cap G _ { \eta } }\Psi(\rho_{\eta _ { i },\eta _ { i +1}}) {\rm d} A  \\
   &  =\nu_{\Psi,\eta_i} (E)
\end{align*}
for any Borel set $E\subset\mathbb{D}$.
For $i=N$, we have $\mu_{\Psi,\eta_N} =\nu_{\Psi,\eta_N}$. Therefore,  $\mu_{\Psi} \lesssim\nu_{\Psi}$ for $1\leq i\leq N$.
Furthermore, we get $$\frac{\mu_{\Psi}(W(\xi,h))}{ h^{2}}\lesssim\frac{\nu_{\Psi}(W(\xi,h))}{h^{2}}.$$
Since $\nu_{\Psi}$ is a vanishing Carleson measure on $\mathbb{D}$,
it follows that $$\lim_{h\rightarrow0}\sup_{\xi\in\mathbb{T}}\frac{\mu_{\Psi}(W(\xi,h))}{ h^{2}}=0.$$
Hence, $\mu_{\Psi}$ is a vanishing Carleson measure on $\mathbb{D}$.

Proof of ${\rm (a)}\Rightarrow{\rm (b)}$.
It follows from \eqref{eq: joint pullback measures} that $\mu_{\Psi}$ is a vanishing Carleson measure on $\mathbb{D}$ if and only if $\mu_{\Psi,\eta}$ is a vanishing Carleson measure on $\mathbb{D}$. Thus, it suffices to show that $\lim\limits_{h\rightarrow0}\sup\limits_{\xi\in\mathbb{T}}\frac{\mu_{\Psi,\eta}(W(\xi,h))}{h^{2}}=0$.

By the definition of $\mu_{\Psi,\eta_{i}}$, for any Borel set $E\subset\mathbb{D}$, we obtain
\begin{align*}
  \mu _{\Psi,\eta} (E) =& \sum _ { i = 1 } ^{N} | s _ { i } ^{\eta} |  \mu _ { \Psi,\eta _ { i } } (E)=\sum _ { i = 1 } ^{N-1} | s _ { i } ^{\eta} |  \mu _ { \Psi,\eta _ { i } } (E)+| s _ { N } ^{\eta} |  \mu _ { \Psi,\eta _ { N} } (E) \\
   =& \sum _ { i = 1 } ^{N-1} \bigg(\int  _{\varphi _ { \eta _ { i } } ^{- 1} (E) \cap G _ { \eta } } | s _ { i } ^{\eta} |\Psi(M_{\eta _ { i },\eta _ { i+1 }}){\rm d} A +\int  _{\varphi _ { \eta _ { i+1 } } ^{- 1} (E) \cap G _ { \eta } }| s _ { i } ^{\eta} |\Psi( M_{\eta _ { i },\eta_ { i+1 }}) {\rm d} A\bigg)\\
   &+\int _ {   \varphi _ { \eta_ { N } } ^{- 1} (E) \cap G _ { \eta }} | s _ { N } ^{\eta} |\Psi( R _ { \eta _ { N } } )  {\rm d} A .
\end{align*}
Applying \eqref{eq: define-Q},
we have $Q _ { \eta}\geq| s _ { i } ^{\eta} |  M _ { \eta _ { i } , \eta _ { i + 1 } },\,  Q _ { \eta }\geq | s _ { N } ^{\eta} |  R _ { \eta _ { N } } $, i.e.,  $M _ { \eta _ { i } , \eta _ { i + 1 } }\leq\frac{Q _ {\eta }}{| s _ { i } ^{\eta} |}$,\, $R _ { \eta_ { N } }\leq\frac{Q _ { \eta }}{| s _ { N } ^{\eta} |}$.\\
Hence
\begin{align*}
  \mu _{\Psi,\eta} (E) &\lesssim\sum _ { i = 1 } ^{N-1} \bigg(\int  _{\varphi _ { \eta _ { i } } ^{- 1} (E)\cap G _ { \eta } } \Psi(Q _ { \eta }){\rm d} A +\int  _{\varphi _ { \eta _ { i+1 } } ^{- 1} (E) \cap G _ { \eta } }\Psi(Q _ { \eta}) {\rm d} A\bigg)+\int _ {   \varphi _ {\eta _ { N } } ^{- 1} (E) \cap G _ { \eta }} \Psi(Q _ { \eta })  {\rm d} A \\
  & \leq 2\sum _ {i = 1 } ^{N} \int _ { \varphi _ { i } ^{- 1} (E) \cap G _ { \eta } } \Psi(Q _ { \eta })   {\rm d} A.
\end{align*}
Set $E=W(\xi,h)$, we obtain
$$ \mu _{\Psi,\eta} (W(\xi,h))\lesssim2\sum _ { i = 1 } ^{N} \int _ { \varphi _ { i } ^{- 1} ( W(\xi,h)) \cap G _ {\eta} } \Psi(Q _ { \eta })   {\rm d} A.$$

Next, we estimate $\Psi(Q _ { \eta }) $ on $ \varphi _ { i } ^{- 1} ( W(\xi,h)) \cap G _ { \eta }$.
Let $\eta \in \mathscr{P}_{N}$. By \eqref{eq: define-G}, we have
$$Q _ { \eta } (z)\leq Q _ { \tau } (z)$$
for $ z \in G _ {\eta }$.
Taking the product over all $\tau\in \mathscr{P}_{N}$, we obtain
$$( Q _ { \eta } (z) ) ^{N !}=\Pi_{\tau\in \mathscr{P}_{N}}Q _ { \eta }(z)\leq \Pi_{\tau\in \mathscr{P}_{N}}Q _ { \tau } (z)=Q (z).$$
It follows from Theorem \ref{th: Julia-Caratheodory-type} that $T$ is compact on $A^{\Psi}(\mathbb{D})$ if and only if $\lim\limits_{|z|\to1}Q(z)=0$.
Thus, for any $\varepsilon > 0$, there exists $r \in (0,1)$ such that $Q (z)<\varepsilon^{N !}$ for $|z|>r$.
So
$$Q _ { \eta} (z) \leq ( Q (z) ) ^{\frac { 1} { N ! } } < \varepsilon,   \,\,\,\,\,\,\,z \in G _ { \eta } \cap \mathbb { D } ^{r}.$$
Choose $r_{0} \in (0,1)$  such that
$$\varphi _ { i } ( r \mathbb { D } ) \subset r _ { 0 } \mathbb { D } \,\,\,\,\,\text{or equivalently}\,\,\,\,\,\, \varphi _ { i } ^{- 1} ( \mathbb { D } ^{r _ { 0} } ) \subset \mathbb { D } ^{r}.$$
By the definition of the Carleson window  $W(\xi,h)$, there exists $\xi \in \mathbb{T}$ such that $W(\xi,h) \subset \mathbb{D}^{r_0}$.
Thus $$\varphi _ { i } ^{- 1}(W(\xi,h))\subset \varphi _ { i } ^{- 1}(\mathbb{D}^{r_0})\subset\mathbb { D } ^{r}.$$
Furthermore,
$$\varphi _ { i } ^{- 1}(W(\xi,h))\cap G _ { \eta }\subset\mathbb { D } ^{r}\cap G _ { \eta }.$$
Hence, for every $ z \in \varphi _ { i } ^{- 1}(W(\xi,h))\cap G _ { \eta}$, we have $ z \in \mathbb { D } ^{r}\cap G _ {\eta }$. So, $Q _ { \eta } (z)< \varepsilon$, and
\begin{align*}
    \frac{\mu _{\Psi,\eta} (W(\xi,h))}{h^{2}}  & \lesssim  2\Psi(\varepsilon)\sum _ { i= 1 } ^{N} \frac{\int _ { \varphi _ { i } ^{- 1} ( W(\xi,h)) }  {\rm d} A}{h^{2}}\\
   & =2\Psi(\varepsilon)\sum _ { i = 1 } ^{N} \frac{(A\circ \varphi _ { i } ^{- 1}) ( W(\xi,h)))}{h^{2}} .
\end{align*}
Since every composition operator is bounded on $A^{\Psi}(\mathbb{D})$, we have
 $$\lim_{h\rightarrow0}\sup_{\xi\in\mathbb{T}}\frac{\mu_{\Psi,\eta}(W(\xi,h))}{h^{2}}=0 .$$
Therefore, $\mu_{\Psi,\eta}$ is a vanishing  Carleson measure on $\mathbb{D}$,
which implies that $\mu_{\Psi}$ is also a vanishing  Carleson measure on $\mathbb{D}$.

Proof of ${\rm (b)}\Rightarrow{\rm (a)}$. To prove $T$ is compact on $A^{\Psi}(\mathbb{D})$, let $\{f_{n}\}$ be an arbitrary sequence in $A^{\Psi}(\mathbb{D})$ such that $\sup_{n}\|f_ {n}\|_{A^{\Psi}(\mathbb{D})} \leq1$ and $f_{n}\rightarrow0$ uniformly on compact subsets of $\mathbb{D}$. By Lemma \ref{le: sequence-compact}, it suffices to show that $\|Tf_ {n}\|_{A^{\Psi}(\mathbb{D})}\rightarrow0$.
Fix $r \in (0,1)$.
Since $r\mathbb{D}$ is a compact subset of $\mathbb{D}$,
$\{f_{n}\}$ converges uniformly to $0$ on $r\mathbb{D}$. Noting that
$$ \mathbb { D }= \bigcup _ { \eta \in \mathscr { P } _ { N } } G _ {\eta}, $$
we have
$$\mathbb{D}^{r}= \mathbb { D }\cap \mathbb{D}^{r} =\bigg(\bigcup _ { \eta \in \mathscr { P } _ { N } } G _ { \eta }\bigg)\cap \mathbb{D}^{r}= \bigcup _ { \eta \in \mathscr { P } _ { N } } \bigg(G _ { \eta }\cap \mathbb{D}^{r}\bigg)=\bigcup _ { \eta \in \mathscr { P } _ { N } } G_{\eta}^{r}.$$
Hence
\begin{align*}
 \limsup _ { n \to \infty } \int _ { \mathbb { D } } \Psi(| T f _ { n } |)    {\rm d} A & = \limsup _ { n \to \infty } \int _ {\mathbb { D }^{r} } \Psi(| T f _ { n } |)    {\rm d} A+\limsup _ { n \to \infty } \int _ { r\mathbb{D} }\Psi(| T f _ { n } |)   {\rm d} A \\
   & = \limsup _ { n \to \infty } \int _ { \mathbb { D }^{r} } \Psi(| T f _ { n } |)   {\rm d} A  \\
   & = \limsup _ { n \to \infty } \int _ { \bigcup _ { \eta \in \mathscr { P } _ { N } } G_{\eta}^{r} } \Psi(| T f _ { n } |)    {\rm d} A  \\
   & \leq \limsup _ { n \to \infty } \sum _ {\eta \in \mathscr { P } _ { N }}\int_{G _ { \eta } ^{r} }  \Psi(| T f _ { n } |)   {\rm d} A .
\end{align*}

Let $$T_{1}= \sum _ { i = 1 } ^{N - 1} s _ { i } ^{\eta} T _ { \eta _ { i } , \eta_ { i + 1 } },\,\,\,\,T_{2}=  s _ { N } ^{\eta} T _ { \eta _ {N}}. $$
By the non-decreasing of $\Psi$ and \eqref{eq: subadditivity}, we have
$$\Psi(|T_{1}f _ { n }|)\leq\Psi(\sum _ { i = 1 } ^{N - 1} |s _ { i } ^{\eta}| |T _ { \eta_ { i } ,\eta _ { i + 1 } } f _ { n }|)\lesssim\sum _ { i = 1 } ^{N - 1}\Psi( |s _ { i } ^{\eta}| |T _ { \eta_ { i } , \eta _ { i + 1 } } f _ { n }|).$$
For $0\leq|s _ { i } ^{\eta}|\leq1$, the convexity of $\Psi$ gives
$$\Psi( |s _ { i } ^{\eta}| |T _ { \eta _ { i } , \eta_ { i + 1 } } f _ { n }|)\leq|s _ { i } ^{\eta}|\Psi(  |T _ { \eta _ { i } , \eta_ { i + 1 } } f _ { n }|).$$
For $|s _ { i } ^{\eta}|>1$, the $\Delta_{2}$-condition yields
$$\Psi( |s _ { i } ^{\eta}| |T _ { \eta _ { i } , \eta _ { i + 1 } } f _ { n }|)\lesssim|s _ { i } ^{\eta}|^{p}\Psi(  |T _ { \eta _ { i } , \eta _ { i + 1 } } f _ { n }|)=|s _ { i } ^{\eta}|^{p-1} |s _ { i } ^{\eta}|\Psi(  |T _ { \eta _ { i } , \eta_ { i + 1 } } f _ { n }|).$$
Therefore
$$\Psi( |s _ { i } ^{\eta}| |T _ { \eta _ { i } , \eta_ { i + 1 } } f _ { n }|)\lesssim\max\{1,|s _ { i } ^{\eta}|^{p-1}\}|s _ { i } ^{\eta}|\Psi(  |T _ { \eta _ { i } , \eta _ { i + 1 } } f _ { n }|)\,\,\,\,\, \mbox{for}\,|s _ { i } ^{\eta}|\geq0, \,1\leq i< N.$$
Thus
$$\Psi(|T_{1}f _ { n }|)\lesssim\sum _ { i = 1 } ^{N - 1} |s _ { i } ^{\eta}|\Psi(  |T _ { \eta _ { i } , \eta _ { i + 1 } } f _ { n }|),$$
and
\begin{align*}
\int_{G _ { \eta } ^{r} }  \Psi(| T_{1} f _ { n } |)   {\rm d} A &\lesssim \sum _ { i = 1 } ^{N - 1} |s _ { i } ^{\eta}| \int_{G _ { \eta } ^{r} } \Psi(  |T _ { \eta_ { i } , \eta _ { i + 1 } } f _ { n }|) {\rm d} A \\
       & = \sum _ { i = 1 } ^{N - 1} |s _ { i } ^{\eta}|(I _ { 1n } ^{\eta, \varepsilon, i} + I _ { 2n } ^{\eta , \varepsilon , i} ),
\end{align*}
where
\begin{align}\label{eq: I-1n}
  I _ { 1n } ^{\eta, \varepsilon , i}  & =\int _ {G _ { \eta } ^{r} \cap \{ M _ { \eta _ { i } , \eta _ { i + 1 } } \geq \varepsilon\} }\Psi(|T _ { \eta _ { i } , \eta _ { i + 1 } }f _ { n } |)  {\rm d} A,
\end{align}
 \begin{align}\label{eq: I-2n}
 I _ { 2n } ^{\eta, \varepsilon , i} &=\int_{G _ {\eta } ^{r} \cap \{ M _ {\eta _ { i } , \eta _ { i + 1 } } < \varepsilon \}}  \Psi(| T _ { \eta _ { i } , \eta _ { i + 1 } } f _ { n } |)  {\rm d} A.
\end{align}
By the same argument, we  also obtain
$$\Psi(|T_{2}f _ { n }|)= \Psi(| s _ { N } ^{\eta} T _ { \eta _ {N}} f _ { n }|)\lesssim|s _ { N } ^{\eta}|\Psi(  |T _ { \eta _ { N } } f _ { n }|).$$
Hence
\begin{align*}
\int_{G _ { \eta } ^{r} }  \Psi(| T_{2} f _ { n } |)   {\rm d} A & \lesssim|s _ { N } ^{\eta}| \int_{G _ { \eta } ^{r} }  \Psi(  |T _ {\eta _ { N } } f _ { n }|)  {\rm d} A \\
 & = |s _ { N } ^{\eta}|(I _ { 1n } ^{\eta , \varepsilon, N} + I _ { 2n } ^{\eta, \varepsilon , N} ),
\end{align*}
where
\begin{align}\label{eq: I-1N}
  I _ { 1n } ^{\eta ,\varepsilon , N}  &=\int _ {G _ { \eta } ^{r} \cap \{ R _ { \eta _ { N }  } \geq \varepsilon\} }\Psi(|T _ { \eta _ { N } }f _ { n } |)  {\rm d} A,
\end{align}
\begin{align}\label{eq: I-2N}
I _ { 2n } ^{\eta , \varepsilon , N} &=\int_{G _ {\eta} ^{r} \cap \{ R _ {\eta _ { N } } < \varepsilon \}} \Psi(| T _ {\eta _ { N } } f _ { n } |)   {\rm d} A  .
\end{align}
Thus, for $T=T_{1}+T_{2}$, we deduce that
\begin{align*}
  \int_{G _ { \eta } ^{r} }  \Psi(| T f _ { n } |)   {\rm d} A   &\lesssim \sum _ { i = 1 } ^{N } |s _ { i } ^{\eta}| (I _ {1 n } ^{\eta, \varepsilon, i} + I _ {2 n } ^{\eta , \varepsilon , i} ) .
\end{align*}

To estimate \eqref{eq: I-1n}, it follows from the non-decreasing of $\Psi$ and \eqref{eq: subadditivity} that
\begin{align*}
  I _ { 1n } ^{\eta, \varepsilon , i} & =\int _ {G _ { \eta } ^{r} \cap \{ M _ { \eta_ { i } , \eta _ { i + 1 } } \geq \varepsilon\} }\Psi(|T _ { \eta _ { i } , \eta _ { i + 1 } }f _ { n } |)  {\rm d} A   \\
   & \leq \int _ {G _ { \eta } ^{r} \cap \{ M _ { \eta _ { i } , \eta_ { i + 1 } } \geq \varepsilon\} }\Psi(|T _ { \eta _ { i } , \eta _ { i + 1 } }f _ { n } |)  \frac{ \Psi(M _ { \eta_ { i } , \eta _ { i + 1 } }) }{\Psi(\varepsilon)}{\rm d} A \\
   & \leq \frac{ 1 }{\Psi(\varepsilon)}\int _ {G _ {\eta} }\Psi(|T _ { \eta_ { i } , \eta _ { i + 1 } }f _ { n } |)\Psi(M _ { \eta _ { i } ,\eta _ { i + 1 } })  {\rm d} A \\
   &=\frac{ 1 }{\Psi(\varepsilon)}\int _ {G _ {\eta } }\Psi(|(T _ { \eta_ { i }}- T_{\eta _ { i + 1 }})f _ { n } |)\Psi(M _ { \eta _ {i } , \eta _ { i + 1 } })  {\rm d} A\\
   &=\frac{ 1 }{\Psi(\varepsilon)}\int _ {G _ { \eta} }\Psi(|T _ { \eta_ { i }}f _ { n } - T _ {\eta _ { i + 1 } }f _ { n } |)\Psi(M _ { \eta _ { i } , \eta _ { i + 1 } })  {\rm d} A \\
   &\leq\frac{ 1 }{\Psi(\varepsilon)}\int _ {G _ { \eta } }\Psi(|T _ { \eta _ { i } }f _ { n }|+ |T _ {\eta _ { i + 1 } }f _ { n } |)\Psi(M _ { \eta _ { i } , \eta _ { i + 1 } })  {\rm d} A\\
   &\lesssim \frac{ 1 }{\Psi(\varepsilon)}\int _ {G _ { \eta } }[\Psi(|T _ {\eta _ { i } }f _ { n }|)+ \Psi(|T _ {\eta_ { i + 1 } }f _ { n } |)]\Psi(M _ { \eta _ { i } , \eta _ { i + 1 } })  {\rm d} A.
\end{align*}
For \eqref{eq: I-1N}, by the non-decreasing of $\Psi$, we get
\begin{align*}
  I _ { 1n } ^{\eta , \varepsilon , N} & =\int _ {G _ { \eta } ^{r} \cap \{R _ {\eta _ { N }  }\geq \varepsilon\} }\Psi(|T _ { \eta _ { N } }f _ { n } |)  {\rm d} A   \\
   & \leq \int _ {G _ { \eta } ^{r} \cap \{ R _ { \eta _ { N } } \geq \varepsilon\} }\Psi(|T _ { \eta _ { N }  }f _ { n } |)  \frac{ \Psi( R _ { \eta _ { N }  } ) }{\Psi(\varepsilon)} {\rm d} A \\
   & \leq \frac{ 1 }{\Psi(\varepsilon)}\int _ {G _ {\eta } }\Psi(|T _ { \eta_ { N }  }f _ { n } |)\Psi( R _ { \eta _ { N }  } )  {\rm d}A .
\end{align*}
Hence, for $ 1 \leq i \leq N$, it follows from \eqref{eq: measure} that
\begin{align*}
  \sum _ { i = 1 } ^{N } |s _ { i } ^{\eta}|I _ {1 n } ^{\eta, \varepsilon, i} & =\sum _ { i = 1 } ^{N-1 } |s _ { i } ^{\eta}|I _ { 1n } ^{\eta , \varepsilon, i} + |s _ { N } ^{\eta}|I _ { 1n } ^{\eta , \varepsilon, N} \\
   & \lesssim \frac{ 1 }{\Psi(\varepsilon)}\bigg(\sum _ { i = 1 } ^{N-1 } |s _ {i } ^{\eta}|\int _ {G _ { \eta } }[\Psi(|T _ { \eta_ { i } }f _ { n }|)+ \Psi(|T _ {\eta_ { i + 1 } }f _ { n } |)]\Psi(M _ { \eta _ { i } , \eta _ { i + 1 } })  {\rm d} A \\
   &+|s _ { N } ^{\eta}|\int _ {G _ { \eta } }\Psi(|T _ {\eta_ { N }  }f _ { n } |)\Psi( R _ { \eta _ { N }  } )  {\rm d} A \bigg)  \\
   & =\frac{ 1 }{\Psi(\varepsilon)}\int _ {\mathbb{D}}\Psi(|f _ { n } |)  {\rm d} \mu_{\Psi,\eta}.
\end{align*}
Since $\mu_{\Psi}$ is a vanishing Carleson measure on $\mathbb{D}$ and $\mu_{\Psi}= \sum _ { \eta \in \mathscr { P } _ { N } } \mu _{\Psi,\eta}$, it follows that $\mu_{\Psi,\eta}$ is a vanishing Carleson measure on $\mathbb{D}$. Moreover, since $\sup\limits_{n}\|f_ {n}\|_{A^{\Psi}(\mathbb{D})}
 \leq1$ and $f_{n}\rightarrow0$ uniformly on compact subsets of $\mathbb{D}$, Lemma \ref{le: norm-convergence} implies that $\lim\limits_{n\rightarrow\infty} \int _ {\mathbb{D}}\Psi(|f _ { n } |)  {\rm d} \mu_{\Psi,\eta}=0$.
Hence
$$\lim_{n\rightarrow\infty}\sum _ { i = 1 } ^{N } |s _ { i } ^{\eta}|I _ { 1n } ^{\eta , \varepsilon, i}=0.$$

For \eqref{eq: I-2n},
let $K=G _ { \eta } ^{r} \cap \{ M _ { \eta _ { i } ,\eta_ { i + 1 } } < \varepsilon \}\subset\mathbb{D}$, then we have $\sup\limits_{z\in K} M _ { \eta_ { i } ,\eta_ { i + 1 } } \leq\varepsilon$. By Lemma \ref{le: symbols-difference}, there exists a constant  $h_{\eta}(\varepsilon)=\sum _ { i = 1 } ^{N-1 } |s _ { i } ^{\eta}| h_{i}(\varepsilon) >0$ such that $\lim\limits_{\varepsilon\rightarrow0}h_{\eta}(\varepsilon)=0$ and
\begin{align*}
  \sum _ { i = 1 } ^{N-1 } |s _ { i } ^{\eta}| I _ { 2n } ^{\eta, \varepsilon , i} &=\sum _ { i = 1 } ^{N-1 } |s _ { i } ^{\eta}|\int_{K}  \Psi(| T _ { \eta _ { i } , \eta _ { i + 1 } } f _ { n } |)   {\rm d} A\\
   & = \sum _ { i = 1 } ^{N-1 } |s _ { i } ^{\eta}|\int_{K}\Psi(|T _ { \eta _ { i }}f _ { n } - T _ {\eta _ { i + 1 } }f _ { n } |) {\rm d}A \\
   & \leq \sum _ { i = 1 } ^{N-1 } |s _ { i } ^{\eta}|h_{i}(\varepsilon) \int_{\mathbb{D}}  \Psi(| f _ { n } |)   {\rm d} A  \leq h_{\eta}(\varepsilon) .
\end{align*}
For \eqref{eq: I-2N},
let $H=\chi_{G _ { \eta } ^{r} \cap \{ R _ { \eta_ { N } } < \varepsilon \}}\leq1$, then $\sup\limits_{z\in\mathbb{D}}H(z) R _ { \eta _ { N }}(z)\leq\varepsilon$. By Lemma \ref{le: Orlicz-Carleson}, there exists a constant $C=|s _ {N} ^{\eta}|C_{1}>0$ such that
\begin{align*}
 |s _ {N} ^{\eta}| I_ { 2n } ^{\eta, \varepsilon , N} & = |s _ {N} ^{\eta}| \int_{\mathbb{D}}  \Psi(|T _ { \eta_ { N }  } f _ { n } |) \chi_{G _ { \eta } ^{r} \cap \{ R _ {\eta _ { N } } < \varepsilon \}}  {\rm d} A\\
   & =  |s _ {N} ^{\eta}| \int_{\mathbb{D}}  \Psi(| T _ { \eta _ { N }  } f _ { n } |) H  {\rm d} A\\
   & \leq |s _ {N} ^{\eta}|C_{1}\varepsilon^{\gamma}\int_{\mathbb{D}} \Psi(| f _ { n } |)  {\rm d} A\leq C\varepsilon^{\gamma}.
\end{align*}

Therefore
\begin{align*}
  \limsup_{n\rightarrow\infty}\int_{G _ { \eta} ^{r} }  \Psi(| T f _ { n } |)   {\rm d} A &\leq \limsup_{n\rightarrow\infty}\sum _ { i= 1 } ^{N } |s _ { i } ^{\eta}| (I _ {1 n } ^{\eta , \varepsilon, i} + I  _ {2 n } ^{\eta, \varepsilon , i} ) \\
   & \leq  h_{\eta}(\varepsilon)+C\varepsilon^{\gamma}.
\end{align*}
Taking the limit $\varepsilon\rightarrow0$, we have
$$\limsup_{n\rightarrow\infty}\int_{G _ { \eta } ^{r} }  \Psi(| T f _ { n } |)   {\rm d} A=0.$$
 Hence $$\lim _ { n \to \infty } \int _ { \mathbb { D } } \Psi(| T f _ { n } |)    {\rm d} A=0.$$
It follows from Lemma \ref{le: norm-convergence}  that $\|Tf_{n}\|_{A^{\Psi}(\mathbb{D})}\rightarrow0$. This implies that $T$ is compact on $A^{\Psi}(\mathbb{D})$,
and the proof of Theorem 1.2 is complete.

\vspace{10mm}

\noindent\textbf{Data availability}\ Data sharing not applicable to this article as no datasets were generated or analysed during the current study.

\section*{Declarations}

\noindent\textbf{Conflict of interest}\ The authors have no relevant financial or non-financial interests to disclose.

\end{document}